\title{ \LARGE \bf%
$L^p$-asymptotic stability analysis of a 1D wave equation with a nonlinear damping
}
\author{Yacine Chitour$^{1}$, Swann Marx$^{2}$ and Christophe Prieur$^{3}$}
\newtheorem{theorem}{Theorem}
\newtheorem{proposition}{Proposition}
\newtheorem{definition}{Definition}
\newtheorem{example}{Example}
\begin{document}
\footnotetext[1]{Laboratoire des Signaux et Systemes (L2S), CNRS - CentraleSupelec - Universite Paris-Sud, 3 rue Joliot Curie, F-91192, Gif-sur-Yvette, France}
\footnotetext[2]{Universit\'e de Toulouse - CNRS - 7 avenue du colonel Roche, F-31400 Toulouse, France}
\footnotetext[3]{Univ. Grenoble Alpes, CNRS, Grenoble INP, Gipsa-lab,
F-38000 Grenoble, France}
\maketitle
\abstract{This paper is concerned with the asymptotic stability analysis of a one dimensional wave equation with Dirichlet boundary conditions subject to a nonlinear distributed damping with an $L^p$ functional framework, $p\in [2,\infty]$. Some well-posedness results are provided together with exponential decay to zero of trajectories, with an estimation of the decay rate. The well-posedness results are proved by considering an appropriate functional of the energy in the desired functional spaces introduced by Haraux in \cite{haraux1D}. Asymptotic behavior analysis is based on an attractivity result on a trajectory of an infinite-dimensional linear time-varying system with a special structure, which relies on the introduction of a suitable Lyapunov functional. Note that some of the results of this paper apply for a large class of nonmonotone dampings.}
\tableofcontents

\section{Introduction}


This paper is concerned with the asymptotic behavior of a one-dimensional wave equation subject to a nonlinear damping with a functional framework relying on $L^p$ spaces with $p\in[2,\infty]$. The one-dimensional wave equation is defined as follows
\begin{equation}
\label{wave-nonmonotone}
\left\{
\begin{split}
&z_{tt}(t,x) =z_{xx}(t,x)-a(x)\sigma(z_t(t,x)),\: (t,x)\in \mathbb{R}_+\times [0,1]\\
&z(t,0)=z(t,1) = 0,\: t\in\mathbb{R}_+\\
&z(0,x) = z_0(x),\: z_t(0,x)=z_1(x),\: x\in [0,1],
\end{split}
\right.
\end{equation}
where $z$ denotes the state belonging to some functional space (to be defined), $a:[0,1]\to \mathbb{R}_+$ is continuous, non zero and bounded by some positive constant $a_\infty$ and $\sigma: \mathbb{R}\to \mathbb{R}$ is a nonlinear function for which we will assume some suitable properties later on. This means in particular that the damping that we are considering might be \emph{localized} (i.e., it acts on a subdomain of $[0,1]$). 

This type of systems may be studied in the context of control of engineering systems. Indeed, the damping term $-a(x)\sigma(z_t)$ corresponds in this case to a \emph{feedback law} and the nonlinearity $\sigma$ models some constraints on the actuator. Here ``damping'' refers to the fact that the natural energy of the system is nonincreasing as a function of the time, which is immediately implied by the property that $\xi\sigma(\xi)\geq 0$, for every real number $\xi$.
A classical example for the nonlinearity $\sigma$ is, for instance, the \emph{saturation}, which imposes amplitude bounds on the control. Linear systems subject to saturations in the actuator have been studied during many years in the context of automatic control theory. The finite dimensional version of \eqref{wave-nonmonotone} can be written as $\frac{d}{dt} z=Az-BB^{\top}\sigma(z)$ with $A$ a stable $n\times n$ matrix (i.e., $A+A^\top\leq 0$), $B$ any $n\times m$ matrix and $\sigma:\mathbb{R}^n\to\mathbb{R}^n$ a saturation mapping so that $z^\top\sigma(z)\geq 0$. Here $m,n$ are positive integers. In
\cite{liu1996finite}, it has been proven that the latter finite-dimensional system is semi-globally exponentially stable, i.e., trajectories converge to zero exponentially with a decay rate depending on the initial condition. Some of the proofs of the present paper are inspired by \cite{liu1996finite}. 

Since decades, under some suitable assumptions on the nonlinearity $\sigma$, asymptotic behavior of \eqref{wave-nonmonotone} has been studied by means of various tools: LaSalle's Invariance Principle (\cite{slemrod1989weak}, \cite{slemrod1989mcss}, \cite{prieur2016wavecone})), multiplier method (\cite{enrike1990exponential}, \cite{alabau1999stabilisation}, \cite{alabau2002indirect}, \cite{alabau2012some}, \cite{MVC}) or Lyapunov functionals (\cite{marx2018stability}). However, while this equation is usually studied in the classical functional space $L^2(0,1)\times H^1_0(0,1)$, our paper is devoted to the case of more general functional spaces, defined as follows
\begin{equation}
\begin{split}
&H_p(0,1):= W_0^{1,p}(0,1)\times L^p(0,1)\\
&D_p(0,1):=W^{2,p}(0,1)\cap W_0^{1,p}(0,1)\times W_0^{1,p}(0,1),
\end{split}
\end{equation}
with $p\in [2,\infty]$. The first functional space is equipped with the following norm:
\begin{equation}
\begin{split}
&\Vert (z,z_t)\Vert_{H_p(0,1)} := \left(\int_0^1 [z_x(x)|^p dx\right)^{\frac{1}{p}} + \left(\int_0^1 |z_t(x)|^p dx\right)^{\frac{1}{p}},\: \forall p\in [2,\infty),\\
&\Vert (z,z_t)\Vert_{H_\infty(0,1)}:=\Vert z_x\Vert_{L^\infty(0,1)} + \Vert z_t\Vert_{L^\infty(0,1)},\: \text{for $p=\infty$.}
\end{split}
\end{equation}
The second one is equipped with the following norm
\begin{equation}
\begin{split}
&\Vert (z,z_t)\Vert_{D_p(0,1)}:=\left(\int_0^1 |z_{xx}(x)|^p dx\right)^{\frac{1}{p}} + \left(\int_0^1 |z_{tx}(x)|^pdx\right)^{\frac{1}{p}},\: \forall p\in [2,\infty)\\
&\Vert (z,z_t)\Vert_{D_\infty(0,1)}:=\Vert z_{xx}\Vert_{L^\infty(0,1)} + \Vert z_{t,x}\Vert_{L^\infty(0,1)},\: \text{for $p=\infty$.}
\end{split}
\end{equation}
To the best of our knowledge, very few is known about the global asymptotic stability of \eqref{wave-nonmonotone} in these functional spaces. This is probably due to the fact that for linear wave equations on domains of $\mathbb{R}^n$, $n\geq 2$, the D'Alembertian operator, given by $ \Box z:=z_{tt}-\Delta z$ (with various boundary conditions), is not a well defined bounded operator in general for a $L^p$ space framework with $p\neq 2$ (see e.g., \cite{peral1980lp}).
In \cite{haraux1D}, it is shown that the D'Alembertian operator in one space dimension (with Dirichlet boundary conditions) is a bounded operator for every $H_p$, $p\in[2,\infty]$, and some additional results on decay rates are obtained for 1D waves with a global damping action (i.e. $a\equiv 1$) and where $\sigma$ is differentiable and satisfying, for some positive constants $k_0,k_1$ and $R\geq r>0$,\footnote{All along the paper, given a function $f:\: x\in\mathbb{R}\mapsto f(x)\in\mathbb{R}$, we will denote by $f^\prime$ its derivative.}
\begin{equation}
k_0|s|^r \leq \sigma^\prime(s)\leq k_1(|s|^r + |s|^R),\:\forall s\in\mathbb{R}.
\end{equation}
In \cite{amadori2019decay}, the authors provide an analysis of the asymptotic behavior of trajectories of \eqref{wave-nonmonotone} assuming that the damping function $a$ is bounded away from zero and the nonlinearity $\sigma$ belongs to a positive sector, i.e., 
\begin{equation}
0<k_1\leq a(x)\leq k_2,\:\forall x\in [0,1],\quad
0<g_1\leq \sigma'(\xi)\leq g_2,\:\forall \xi\in\mathbb{R},\end{equation}  
where $k_1,k_2,g_1,g_2$ are positive constants.
The authors propose some error estimation of approximated solutions (i.e., discretized ones) and establish exponential convergence in $L^\infty$ towards zero, provided that the initial conditions are of bounded variations. The techniques used in this paper are based on the theory of scalar conservation laws, which makes indeed the analysis in those spaces really natural. In constrast with these papers, our results apply for more general nonlinearities, with, in some cases, less regular initial conditions, and we even treat the case where the damping action is localized, i.e., it acts on a subdomain of $[0,1]$, which does not hold true for \cite{haraux1D}, neither for \cite{amadori2019decay}.

Note that, assuming that the initial conditions are in $H_\infty$, we provide some exponential stability results with decay rate estimate in the case where $\sigma$ is not monotone, which is a non trivial task in general. Indeed, as illustrated in many papers (\cite{alabau2002indirect}, \cite{alabau2012some}, \cite{alabau1999stabilisation}, \cite{enrike1990exponential}, \cite{haraux1D}, \cite{marx2018stability}, \cite{slemrod1989mcss}, \cite{seidman2001note}, etc.), the monotone property of the nonlinearity is crucial to firstly prove the asymptotic stability of the system under consideration, by applying the LaSalle's Invariance principle, and secondly estimate the decay rates. There exists a vast litterature about linear PDEs subject to monotone nonlinear dampings. For instance, in \cite{slemrod1989mcss} and \cite{seidman2001note}, asymptotic stability of the origin of abstract control systems subject to monotone nonlinear dampings is proved, using an infinite-dimensional version of LaSalle's invariance principle. These results have been then extended to more general infinite-dimensional systems in \cite{map2017mcss}. More recently, in \cite{marx2018stability}, some decay rate estimates have been provided for such systems via Lyapunov techniques. Let us also mention \cite{prieur2016wavecone} and \cite{mcpa2017siam}, where a wave equation and a nonlinear Korteweg-de Vries, respectively, subject to a nonlinear monotone damping are considered and where the global asymptotic stability is proved. 

Let us note however that some results have been obtained for the case where $\sigma$ is not monotone. In \cite{slemrod1989weak}, a weak asymptotic convergence result decay is obtained thanks to a relaxed LaSalle's invariance principle. Using some compensated compactness techniques, the convergence of the trajectories of a one-dimensional wave equation with nonmonotone damping is proved in \cite{feireisl1993strong}. Note however that the estimation of the decay rate is not provided. Finally, exponential stability with decay rate estimates for such an equation are given in \cite{MVC}, but with a particular nonlinearity, that is less general than the one we consider in our paper. Note however that the results of this paper provide some nice exponential decay results for damped wave equations in a domain whose dimension is larger than one.

In our paper, after introducing a general nonlinear nonmonotone damping, we propose some well-posedness results of a one-dimensional wave equation subject to such a damping. Furthermore, using Lyapunov techniques for linear time-varying systems collected in Section \ref{sec_appendix}, we  derive exponential stability with estimates of the decay rate of the trajectory of this system in $H_p$ with $p\in [2,\infty)$ provided that the initial conditions are in $H_\infty$, and without assuming that $\sigma$ is monotone. On the other hand, assuming that $\sigma$ is monotone and that the initial conditions are in $H_p$, with finite $p$, we also get exponential stability with decay rate estimates of the trajectory in $H_q$, where $q$ is such that $2\leq q<p$. Finally, supposing that $\sigma$ is monotone, linearly bounded and continuously differentiable and assuming that the initial conditions belong to $D_\infty$, we give a decay rate estimate of the trajectory in $H_\infty$.

This paper is organized as follows. In Section \ref{sec_main}, the main results of the paper are collected, i.e., we state theorems about well-posedness and exponential stability with decay rate estimates. Section \ref{sec_appendix} introduces a result of independent interest about a specific time-varying infinite-dimensional linear systems, but which is instrumental for the proof of our asymptotic stability theorems. Section \ref{sec_proof} is devoted to the proof of the main results and Section \ref{sec_conclusion} collects some concluding remarks, together with further research lines to be followed. \\


\textbf{Acknowledgment:} We would like to thank Enrique Zuazua for having kindly invited the two first authors of the paper in DeustoTech, Bilbao, Spain and for having pointed out the reference \cite{haraux1D}, which has been crucial for our analysis. We would like to thank Nicolas Burq for interesting discussions about the negative results of the semigroup associated to the D'Alembertian in dimension $n\geq 2$.

\section{Main results}
\label{sec_main}

In this paper, the class of nonlinearity that we will consider is defined as follows.
\begin{definition}[Nonlinear damping]
\label{def-damping}
A function $\sigma:\mathbb{R}\rightarrow \mathbb{R}$ is said to be a \emph{nonlinear damping function} if 
\begin{itemize}
\item[1.] it is locally Lipschitz and odd;
\item[2.] for any $s\in \mathbb{R}\setminus \lbrace 0\rbrace$, $\sigma(s)s>0$;
\item[3.] the function $\sigma$ is differentiable at $s=0$ with $\sigma'(0)=C_1$ for some $C_1>0$.
\end{itemize}
\end{definition}
Note that it follows from the above assumptions that $\sigma(0)=0$ and some of our subsequent results can also be derived without assuming that $\sigma$ is odd. Let us give now some examples of nonlinear dampings satisfying the properties given in Definition \ref{def-damping}. 
%
%

\begin{example}[Example of scalar nonlinear dampings]
Below are listed some examples of nonlinear dampings:
\begin{itemize}
\item[1.] The classical saturation, defined as follows:
\begin{equation}
\sigma(s)=\mathrm{sat}(s):=\left\{
\begin{split}
&s  &\text{ if } |s|\leq 1,\\
&\frac{s}{|s|}  &\text{ if } |s|\geq 1,
\end{split}
\right.
\end{equation}
satisfies all the properties of Definition \ref{def-damping}.
\item[2.] The following nonlinearity
\begin{equation}
\label{example-nonmonotone}
\sigma(s) = \mathrm{sat}\left(\frac{1}{4}s-\frac{1}{30}\sin(10s)\right)
\end{equation}
is also a nonlinear damping. Note moreover that it is not monotone, as illustrated by Figure \ref{fig:nonmonotone}.
\item[3.] The dampings mentioned earlier in the introduction satisfies all the properties of Definition \ref{def-damping}. Recall that the first one is defined as follows: $\sigma$ is differentiable and satisfies, for some positive constants $k_0,k_1$ and $R\geq r>0$,
\begin{equation}
k_0|s|^r \leq \sigma^\prime(s)\leq k_1(|s|^r + |s|^R),\:\forall s\in\mathbb{R}.
\end{equation}
\item[4.] The nonmonotone damping under consideration in \cite{MVC} satisfies also all the properties of Definition \ref{def-damping}. It is a continuously differentiable function defined as follows: 
\begin{equation}
\begin{split}
&s\sigma(s) \geq 0\\
&\sigma^\prime(s) \geq -m\\
c_1 \frac{|s|}{\left(\log(2+|s|)\right)^k}\leq\: &\sigma(s) \leq c_2 |s|^q,\: \forall s\text{ such that } |s|\geq 1
\end{split} 
\end{equation} 
with $c_1>0$, $c_2>0$, $m\geq 0$, $q\geq 0$ and $k\in [0,1]$.
\end{itemize}
\end{example}

\begin{figure}[h!]
\begin{center}
\includegraphics[scale=0.6]{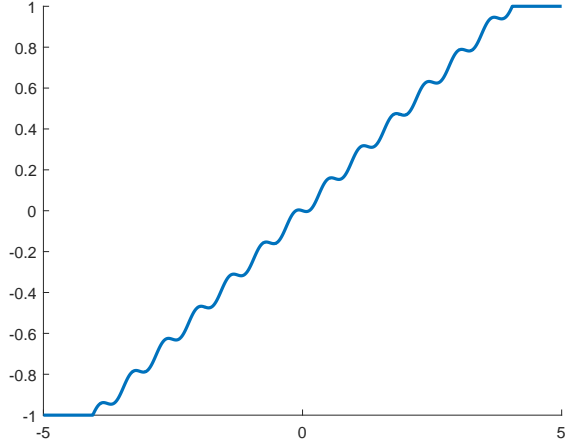}
\caption{For any $s\in [-5,5]$, the figure illustrates the function $\sigma$ given by \eqref{example-nonmonotone}}
\label{fig:nonmonotone}
\end{center}
\end{figure}

As illustrated in many papers \cite{enrike1990exponential}, \cite{alabau1999stabilisation}, \cite{marx2018stability}, some additional regularity properties are usually needed to obtain a characterization of the asymptotic stability of \eqref{wave-nonmonotone}. To be more precise, we need the state $z_t$ to be bounded in $L^\infty(0,1)$, for all $t\geq 0$. With a monotone nonlinearity $\sigma$, one would have this regularity result thanks to some nonlinear semigroup theorems. In the case where we consider nonmonotone nonlinearity, using the nonstandard functional space $H_\infty$, we can obtain such a regularity.

We are now in position to state the well-posedness results of our paper.
\begin{theorem}[Well-posedness]
\label{thm-wp}
\begin{itemize}
\item[1.] Assume that $\sigma$ is a \emph{nonlinear damping}. For any initial conditions $(z_0,z_1)\in H_\infty(0,1)$, there exists a unique solution $$z\in L^\infty(\mathbb{R}_+;W^{1,\infty}(0,1))\cap W^{1,\infty}(\mathbb{R}_+;L^\infty(0,1))$$ to \eqref{wave-nonmonotone}. Moreover, the following inequality is satisfied, for all $t\geq 0$
\begin{equation}
\Vert (z,z_t)\Vert_{H_\infty(0,1)}\leq 2\max(\Vert z_0^\prime\Vert_{L^\infty(0,1)}, \Vert z_1\Vert_{L^\infty(0,1)}).
\end{equation}
\item[2.] Assume that $\sigma$ is a \emph{nonlinear damping}, which is \emph{linearly bounded}, i.e., there exists $m>0$ such that
\begin{equation}
|\sigma(s)|\leq m|s|,\: s\in\mathbb{R}.
\end{equation}
For any initial conditions $(z_0,z_1)\in H_p(0,1)$, there exists a unique solution $z\in C(\mathbb{R}_+;W^{1,p}(0,1))\cap C^1(\mathbb{R}_+;L^p(0,1))$ to \eqref{wave-nonmonotone}. Moreover, the following inequality is satisfied, for all $t\geq 0$
\begin{equation}
\Vert (z,z_t)\Vert_{H_p(0,1)} \leq 2\Vert (z_0,z_1)\Vert_{H_p(0,1)}.
\end{equation}
\end{itemize}
\end{theorem}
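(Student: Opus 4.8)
The plan is to recast \eqref{wave-nonmonotone} as a $2\times 2$ system for the Riemann invariants $u:=z_t-z_x$ and $v:=z_t+z_x$, which turns the wave operator into the two transport equations
\[
u_t+u_x=-a(x)\,\sigma\!\left(\tfrac{u+v}{2}\right),\qquad v_t-v_x=-a(x)\,\sigma\!\left(\tfrac{u+v}{2}\right),
\]
the Dirichlet conditions becoming the reflection relations $v(t,0)=-u(t,0)$ and $u(t,1)=-v(t,1)$ (both expressing $z_t=0$ at the endpoints). Since $z_x=\tfrac{v-u}{2}$ and $z_t=\tfrac{u+v}{2}$, the norm $\Vert(z,z_t)\Vert_{H_p}$ is equivalent to the $L^p$-norm of $(u,v)$, and Haraux's boundedness of the D'Alembertian on $H_p$ is precisely the statement that the linear ($\sigma\equiv 0$) solution operator $S(t)$ of this transport-with-reflection dynamics is bounded on $H_p$ for all $p\in[2,\infty]$. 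Both assertions then reduce to a local existence/uniqueness step plus a global a priori bound.

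For local existence I would solve the mild (Duhamel) formulation
\[
(z,z_t)(t)=S(t)(z_0,z_1)+\int_0^t S(t-s)\bigl(0,\,-a\,\sigma(z_t(s))\bigr)\,\mathrm{d} s
\]
by the Banach fixed point theorem on $C([0,T];\cdot)$ for $T$ small. In part 1 the space is $H_\infty$: as $\sigma$ is locally Lipschitz, $w\mapsto a\,\sigma(w)$ is Lipschitz on bounded subsets of $L^\infty$, so the Duhamel map contracts; the fact that the stated regularity is $L^\infty(\R_+;W^{1,\infty})\cap W^{1,\infty}(\R_+;L^\infty)$ rather than continuity in time reflects that $S(t)$ need not be strongly continuous in $L^\infty$, so I would read off the solution directly from the characteristic formula. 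In part 2 the linear bound $|\sigma(s)|\le m|s|$ gives $\Vert a\sigma(w)\Vert_{L^p}\le a_\infty m\Vert w\Vert_{L^p}$ and $S(t)$ is a genuine $C_0$-semigroup on $H_p$ for finite $p$, so the same scheme produces $C([0,T];H_p)$ solutions.

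The quantitative bounds are the heart of the proof and come from the family of functionals $\mathcal{E}_p(t):=\Vert u(t)\Vert_{L^p}^p+\Vert v(t)\Vert_{L^p}^p$. Differentiating along the transport dynamics and integrating by parts, the endpoint contributions cancel exactly because the reflection relations force $|u|=|v|$ at $x=0$ and $x=1$, leaving
\[
\dot{\mathcal{E}}_p=-p\int_0^1 a(x)\Bigl(|u|^{p-1}\sign(u)+|v|^{p-1}\sign(v)\Bigr)\,\sigma\!\left(\tfrac{u+v}{2}\right)\dx .
\]
Since $s\mapsto|s|^{p-1}\sign(s)$ is odd and increasing, the bracket has the sign of $u+v$, hence the sign of $\sigma(\tfrac{u+v}{2})$ by $\sigma(s)s\ge 0$; the integrand is therefore nonnegative and $\mathcal{E}_p$ is nonincreasing. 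Converting back to $(z,z_t)$ yields the bound of part 2, and letting $p\to\infty$ shows that $\max(\Vert u\Vert_{L^\infty},\Vert v\Vert_{L^\infty})$ is nonincreasing, which gives the $H_\infty$ bound of part 1. The same monotonicity prevents finite-time blow-up of the $L^p$ (resp.\ $L^\infty$) norm, so the local solutions extend to $\R_+$.

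I expect two points to require care. First, the integration by parts and the boundary cancellation above must be justified for the nonsmooth $H_p$/$H_\infty$ data; I would prove them first for smooth data and pass to the limit using the a priori bounds. Second, and more seriously, part 2 only assumes $\sigma$ locally Lipschitz and linearly bounded---neither globally Lipschitz nor monotone---so the Nemytskii operator is not Lipschitz on $L^p$-balls, and the direct contraction used for local well-posedness and for passing from $H_\infty$ approximations to genuine $H_p$ data breaks down. This is the main obstacle: to close uniqueness and the limit passage one must combine the uniform $\mathcal{E}_p$ bound with a stability estimate for the difference $z-\tilde z$, whose Riemann invariants obey the same reflecting transport system so that its boundary terms again cancel, controlling the damping difference through the local Lipschitz constant of $\sigma$ on the range of the solutions---which is delicate precisely because that range is only $L^p$- and not $L^\infty$-bounded for genuine $H_p$ data.
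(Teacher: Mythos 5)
Your Riemann-invariant computation is, in substance, the same mechanism the paper uses: your functional $\mathcal{E}_p(t)=\Vert u(t)\Vert_{L^p}^p+\Vert v(t)\Vert_{L^p}^p$ is exactly Haraux's functional $\Phi(t)=\int_0^1[F(z_t+z_x)+F(z_t-z_x)]\,dx$ with $F(s)=|s|^p$, and your integration by parts with cancellation of the reflected boundary terms is a self-contained re-derivation of the monotonicity result (Corollary 2.2 of \cite{haraux1D}) that the paper invokes as a black box. Likewise your Duhamel/characteristics fixed point for Item 1 is the paper's argument in different clothing: the paper writes the D'Alembert formula for the odd $2$-periodic extensions and runs a contraction in the sup norm, using exactly the local Lipschitz property of $\sigma$ on $L^\infty$-balls; your passage $p\to\infty$ giving monotonicity of $\max(\Vert u\Vert_{L^\infty},\Vert v\Vert_{L^\infty})$ corresponds to the paper's choice $F(s)=[\mathrm{Pos}(|s|-2\max(\Vert z_0'\Vert_{L^\infty},\Vert z_1\Vert_{L^\infty}))]^2$. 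So for Item 1 and for all the a priori bounds your plan is correct and essentially equivalent to the paper's, with the modest bonus of not needing the citation.

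The gap you flag in Item 2 is genuine, and the paper's way around it is the one ingredient your plan lacks. You are right that when $\sigma$ is only locally Lipschitz and linearly bounded, the Nemytskii map $w\mapsto a\,\sigma(w)$ is not Lipschitz on $L^p$-balls, so a direct contraction in $C([0,T];H_p)$ does not close. The paper does not attempt one: it observes instead that any solution of the nonlinear equation also solves the \emph{linear} time-varying equation $z_{tt}=z_{xx}-d(t,x)z_t$, where $d(t,x)=\sigma(z_t)/z_t$ when $z_t\neq 0$ and $d(t,x)=C_1$ when $z_t=0$, and that the structural hypotheses give the pointwise bounds $0\le d(t,x)\le m$ (nonnegativity from the sector condition, boundedness from linear boundedness together with differentiability of $\sigma$ at $0$). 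Haraux's monotonicity result then applies to this linear equation with $F(s)=|s|^p/p$, yielding the $H_p$ estimate for any trajectory without requiring any Lipschitz property of the nonlinearity in $L^p$. This ``freeze the nonlinearity into a bounded nonnegative coefficient'' device is what replaces your stability estimate for differences. Note, however, that it only furnishes the a priori bound along existing solutions: the paper is itself terse about constructing and distinguishing solutions for genuine $H_p$ data (its fixed point is run only in the $L^\infty$ setting of Item 1), so the existence/uniqueness difficulty you honestly identify is sidestepped rather than fully resolved there as well.
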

Now that the functional setting is introduced, we are in position to state our asymptotic stability result. More precisely, the following results state that the trajectories of the system \eqref{wave-nonmonotone} converge \emph{exponentially} towards the equilibrium point $0$ and in some suitable norms, which will depend on the extra assumptions that we will impose.


\begin{theorem}[Semi-global exponential stability]
\label{thm-as}
Consider the 1D wave equation with Dirichlet boundary conditions and the nonlinear damping, as defined in
 \eqref{wave-nonmonotone}. Then, the following statements hold true. 
\begin{description}
\item[1.] For initial conditions $(z_0,z_1)\in H_\infty(0,1)$ satisfying:
\begin{equation}
\Vert (z_0,z_1)\Vert_{H_\infty(0,1)}\leq R,
\end{equation}
where $R$ is a positive constant, then, for any $p\in [2,\infty)$, there exist two positive constants $K:=K(R)$ and $\beta:=\beta(R)$ such that
\begin{equation}
\Vert (z,z_t)\Vert_{H_p(0,1)}\leq Ke^{-\beta t}\Vert (z_0,z_1)\Vert_{H_p(0,1)},\quad \forall t\geq 0.
\end{equation}
\item[2.] Consider initial conditions $(z_0,z_1)\in H_p(0,1)\cap D_2(0,1)$, $p\in (2,\infty)$, satisfying:
\begin{equation}
\Vert (z_0,z_1)\Vert_{D_2(0,1)} \leq R,
\end{equation}
where $R$ is a positive constant. Suppose moreover that $\sigma$ is \emph{monotone} and linearly bounded, i.e. there exists a positive constant $m$ such that:
\begin{equation}
|\sigma(s)|\leq m|s|,\: \forall s\in\mathbb{R}.
\end{equation}
Then, for any $q$ satisfying $2\leq q < p$, there exist two positive constants $K:=K(R)$ and $\beta:=\beta(R)$ such that
\begin{equation}
\Vert (z,z_t)\Vert_{H_q(0,1)}\leq Ke^{-\beta t}\Vert (z_0,z_1)\Vert_{H_q(0,1)},\quad \forall t\geq 0.
\end{equation}
\end{description}
\end{theorem}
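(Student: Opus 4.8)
The plan is to treat both statements through a single device: \emph{freezing the nonlinearity along the trajectory} so that \eqref{wave-nonmonotone} becomes an infinite-dimensional \emph{linear time-varying} wave equation, and then invoking the attractivity result of Section~\ref{sec_appendix}. Given a fixed trajectory $z$, I would introduce the coefficient
\begin{equation}
b(t,x):=\begin{cases}\dfrac{\sigma(z_t(t,x))}{z_t(t,x)}&\text{if }z_t(t,x)\neq0,\\[2mm] C_1&\text{if }z_t(t,x)=0,\end{cases}
\end{equation}
so that \eqref{wave-nonmonotone} reads $z_{tt}=z_{xx}-a(x)b(t,x)z_t$ with the same Dirichlet conditions. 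This is linear in $(z,z_t)$ once $b$ is regarded as a given coefficient, which is exactly the structure the appendix result is designed for.

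The decisive preliminary step is to show that $b$ is \emph{uniformly bounded above and below by positive constants} depending only on $R$, namely $0<c(R)\le b(t,x)\le C(R)$ for all $(t,x)$. Because $\sigma$ is a nonlinear damping, the map $s\mapsto\sigma(s)/s$ extends continuously to $s=0$ with value $C_1>0$ and is strictly positive on every compact interval; hence it suffices to obtain a \emph{time-uniform} bound $\|z_t(t,\cdot)\|_{L^\infty(0,1)}\le M(R)$. For statement~1 this is immediate from Theorem~\ref{thm-wp}.1, which gives $\|(z,z_t)\|_{H_\infty(0,1)}\le 2R$ for all $t\ge0$. For statement~2 the $H_\infty$ bound is not assumed directly, so I would propagate the $D_2$ regularity: differentiating the equation in time shows that $w:=z_t$ solves a damped wave equation $w_{tt}=w_{xx}-a(x)\sigma'(z_t)w_t$ with nonnegative damping coefficient (here monotonicity of $\sigma$ is essential), whence the energy $\tfrac12\int_0^1(z_{tt}^2+z_{tx}^2)\,dx$ is nonincreasing; combined with the linear bound on $\sigma$ and the $H_2$ energy estimate this controls $\|z_t(t,\cdot)\|_{H^1_0(0,1)}$ uniformly in $t$, and the one-dimensional embedding $H^1_0(0,1)\hookrightarrow L^\infty(0,1)$ delivers the desired $M(R)$. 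This is precisely where the \emph{monotone} and \emph{linearly bounded} hypotheses of statement~2 are used.

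With $b$ now known to range in a fixed compact set $[c(R),C(R)]\subset(0,\infty)$, and with $a$ continuous and nonzero (hence positive on a nonempty open subinterval, which in one space dimension suffices for the underlying observability), I would apply the attractivity theorem of Section~\ref{sec_appendix} to the linear time-varying system $z_{tt}=z_{xx}-a(x)b(t,x)z_t$. That result, built on a Lyapunov functional tailored to the $L^p$ framework, produces an exponential decay estimate in the relevant $H_q$-norm, linear in the initial datum, whose rate $\beta(R)$ and constant $K(R)$ depend on the trajectory only through the bounds $c(R),C(R)$ and through $a$. This yields the claimed inequalities: decay in $H_p$ for every finite $p$ in statement~1, and decay in $H_q$ for every $q$ with $2\le q<p$ in statement~2, the endpoint being excluded because only a uniform bound, not decay, is available there for the $L^p$-type energy.

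The main obstacle, in my view, is not the linear decay itself but the uniform two-sided control of $b$, and specifically its lower bound. Exponential stability of the frozen linear system degenerates if the effective damping $a(x)b(t,x)$ is allowed to vanish in time, so one genuinely needs both the time-uniform $L^\infty$ bound on $z_t$ (to keep $b$ away from the regime where $\sigma(s)/s$ could degenerate) and the nondegeneracy $\sigma'(0)=C_1>0$ at the origin (to keep $b$ bounded below precisely where $z_t$ is small). The secondary difficulty is the regularity propagation in statement~2, where the $D_2$ estimate must be closed uniformly in time; this is the step in which monotonicity of $\sigma$ cannot be dispensed with, and it explains why statement~2 carries hypotheses absent from statement~1.
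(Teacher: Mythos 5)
The first half of your argument coincides with the paper's: you freeze the nonlinearity along the trajectory, obtain the two-sided bound $0<c(R)\le b(t,x)\le C(R)$ (via Theorem \ref{thm-wp}, Item 1, for statement 1, and via a time-uniform $D_2$ bound for statement 2), and invoke the time-varying result of Section \ref{sec_appendix}. The gap is in your final step. Theorem \ref{thm-ltv} is a Hilbert-space statement: its proof rests on Datko's converse Lyapunov theorem, a self-adjoint $P\in\mathcal{L}(H)$ and the quadratic functional $\langle Pv,v\rangle_H+M\Vert v\Vert_H^2$. It therefore yields exponential decay only in $H_2(0,1)$, the unique Hilbert space among the $H_p(0,1)$; it is not ``tailored to the $L^p$ framework'' and cannot be applied directly in $H_q$ for $q\neq2$. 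What is missing is the mechanism converting $H_2$ decay into $H_p$ (resp. $H_q$) decay. The paper's second step is precisely this: the frozen evolution family $\Theta(t):=W(t,0)$ is a \emph{linear} operator that is (i) exponentially decaying on $L^2$ by Theorem \ref{thm-ltv}, and (ii) uniformly bounded in time on $L^\infty$ (statement 1, by Theorem \ref{thm-wp}, Item 1) or on $L^p$ (statement 2, using the linearly bounded case); the Riesz--Thorin interpolation theorem then gives exponential decay on every intermediate space, with constant and rate degrading with $p$ (e.g. rate $2\beta/p$ in statement 1). This interpolation is exactly what produces the claimed estimates and what forces $q<p$ strictly in statement 2. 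Your closing remark about the endpoint being excluded actually presupposes this interpolation structure, but the mechanism you state (a direct Lyapunov argument in $H_q$) contradicts it and would fail.

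A secondary issue concerns your regularity propagation in statement 2: differentiating the equation in time uses $\sigma'$, which is not available under the hypotheses there ($\sigma$ is only monotone, linearly bounded and locally Lipschitz). The paper avoids this by nonlinear semigroup theory: $A_\sigma$ is $m$-dissipative, so both $t\mapsto\Vert W_\sigma(t)v_0\Vert_{H_2(0,1)}$ and $t\mapsto\Vert A_\sigma W_\sigma(t)v_0\Vert_{H_2(0,1)}$ are nonincreasing, which gives the uniform $D_2$ bound and then $\Vert z_t(t,\cdot)\Vert_{L^\infty(0,1)}\le R$ by Sobolev embedding. Your formal energy computation is the right intuition, but as written it is not a proof under the stated assumptions.
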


The latter theorem is said to be a \emph{semi-global exponential stability} result, because the constants $K$ and $\beta$ depend on the bound of the initial condition, which is in contrast with the global case. It is neither local, because this bound can be arbitrarily large.

The previous theorem holds for the spaces $H_p$, but none of them provides an exponential stability result in the space $H_\infty$, which is an interesting functional space in practice. Indeed, an asymptotic stability result in this space allows to know how the amplitude of the states $(z,z_t)$ evolves. The following theorem allows one to obtain such a result at the price of assuming initial conditions in a more regular space than the ones considered in the previous theorem. 
\begin{theorem}
\label{thm-infty}
Consider initial conditions $(z_0,z_1)\in D_\infty(0,1)$ satisfying:
\begin{equation}
z^\prime_0(x)= z^\prime_1(x)=0,\hbox{ and }\Vert (z_0,z_1)\Vert_{D_\infty(0,1)} \leq R,
\end{equation}
where $R$ is a positive constant. Suppose moreover that $\sigma$ and the localized function $a$ are \emph{continuously differentiable}. We assume also the derivative of $a$ is essentially bounded, i.e., there exists a positive constant $a_1$ such that
\begin{equation}
|a^\prime(x)|\leq a_1,\quad \hbox{a.e. } x\in [0,1].
\end{equation}
Finally, we assume that $\sigma$ is linearly bounded, i.e. there exists a positive constant $m$ such that
\begin{equation}
|\sigma(s)|\leq m|s|,\: \forall s\in\mathbb{R}.
\end{equation}
Then, there exist two positive constants, $K:=K(R)$ and $\beta(R)$ such that
\begin{equation}
\Vert (z,z_t)\Vert_{H_\infty(0,1)} \leq Ke^{-\beta t} \Vert (z_0,z_1)\Vert_{D_2(0,1)},\quad \forall t\geq 0.
\end{equation}
\end{theorem}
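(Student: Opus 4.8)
The plan is to reduce the $H_\infty$ estimate to two ingredients that are already available, namely the exponential decay of the first-order energy furnished by Theorem~\ref{thm-as} and the attractivity result for linear time-varying wave equations of Section~\ref{sec_appendix}, and then to upgrade $L^2$-type bounds on the derivatives to $L^\infty$ bounds via the one-dimensional Sobolev embedding $W^{1,2}(0,1)\hookrightarrow L^\infty(0,1)$. The key observation is that the time-derivative $v:=z_t$ formally solves a \emph{linear} time-varying wave equation: differentiating \eqref{wave-nonmonotone} in time gives
\begin{equation}
v_{tt}=v_{xx}-q(t,x)\,v_t,\quad v(t,0)=v(t,1)=0,\qquad q(t,x):=a(x)\sigma'(z_t(t,x)),
\end{equation}
with initial data $v(0,\cdot)=z_1$ and $v_t(0,\cdot)=z_0''-a\sigma(z_1)$. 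Since $\sigma$ is monotone here, $q\geq 0$, so this is exactly the kind of system treated in Section~\ref{sec_appendix}.

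First I would settle the regularity. By Theorem~\ref{thm-wp}, the trajectory issued from $(z_0,z_1)\in D_\infty\subset H_\infty$ satisfies $\Vert(z,z_t)\Vert_{H_\infty}\leq 2\max(\Vert z_0'\Vert_{L^\infty},\Vert z_1\Vert_{L^\infty})$ for all $t\geq 0$; in particular $z_t$ stays in a fixed compact subset of $\mathbb{R}$, so that, $\sigma$ being $C^1$, the coefficient $q$ is bounded, and $q\geq 0$. The compatibility conditions on $(z_0,z_1)$ are used precisely to guarantee that $v=z_t$ is a genuine (strong) solution of the differentiated equation with finite second-order energy $E_2(t):=\tfrac12\int_0^1\big(z_{tx}^2+z_{tt}^2\big)\dx$ satisfying $E_2(0)\leq C\Vert(z_0,z_1)\Vert_{D_2}^2$; this is where an approximation argument (regularizing $\sigma$, $a$ and the data, then passing to the limit) would be carried out.

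Next I would apply the attractivity result of Section~\ref{sec_appendix} to the $v$-equation: since $q$ is bounded, nonnegative and, through $\sigma'(0)=C_1>0$ together with the decay of $z_t$, provides the required persistent damping, the $H_2$-energy of $v$ decays exponentially, i.e. $\Vert z_{tx}(t)\Vert_{L^2}+\Vert z_{tt}(t)\Vert_{L^2}\leq Ke^{-\beta t}\Vert(z_0,z_1)\Vert_{D_2}$. In parallel, the first statement of Theorem~\ref{thm-as} with $p=2$ yields $\Vert z_x(t)\Vert_{L^2}+\Vert z_t(t)\Vert_{L^2}\leq Ke^{-\beta t}\Vert(z_0,z_1)\Vert_{D_2}$ (using $\Vert\cdot\Vert_{H_2}\leq C\Vert\cdot\Vert_{D_2}$, which follows from Poincaré since $z_0,z_1$ vanish at the boundary), and \eqref{wave-nonmonotone} itself gives $\Vert z_{xx}\Vert_{L^2}\leq\Vert z_{tt}\Vert_{L^2}+a_\infty m\Vert z_t\Vert_{L^2}$, hence the exponential decay of $\Vert z_{xx}(t)\Vert_{L^2}$. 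It then remains to invoke $W^{1,2}(0,1)\hookrightarrow L^\infty(0,1)$, namely $\Vert z_t\Vert_{L^\infty}\leq C(\Vert z_t\Vert_{L^2}+\Vert z_{tx}\Vert_{L^2})$ and $\Vert z_x\Vert_{L^\infty}\leq C(\Vert z_x\Vert_{L^2}+\Vert z_{xx}\Vert_{L^2})$, and to sum these two bounds to obtain the claimed estimate $\Vert(z,z_t)\Vert_{H_\infty}\leq Ke^{-\beta t}\Vert(z_0,z_1)\Vert_{D_2}$.

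The main obstacle is the regularity step: justifying rigorously that $z_t$ solves the differentiated linear wave equation in a strong enough sense and controlling $E_2(0)$ by $\Vert(z_0,z_1)\Vert_{D_2}^2$, given that $\sigma$ is only $C^1$. A second, more technical, point is the verification that the time-varying coefficient $q=a\sigma'(z_t)$ meets the hypotheses (in particular the uniform lower/observability bound) of the abstract theorem of Section~\ref{sec_appendix}, which is delicate because the damping is localized and $\sigma'$ is only controlled from below near the origin; this is reconciled with the already established decay of $z_t$.
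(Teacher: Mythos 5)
Your proposal is correct in outline, and its first half coincides with the paper's own proof: like the paper, you differentiate \eqref{wave-nonmonotone} in time, view $v=z_t$ as a solution of a linear time-varying damped wave equation with coefficient $a(x)\sigma'(z_t)$, bound this coefficient using the $H_\infty$ estimate of Theorem \ref{thm-wp}, apply Theorem \ref{thm-ltv} to obtain exponential decay of $\Vert z_{tx}\Vert_{L^2}+\Vert z_{tt}\Vert_{L^2}$, and pass to $\Vert z_t\Vert_{L^\infty}$ by the embedding of $W^{1,2}(0,1)$ into $L^\infty(0,1)$. Where you genuinely diverge is the treatment of $z_x$. The paper writes the equation satisfied by $w=z_x$, namely
\begin{equation*}
w_{tt}=w_{xx}-a'(x)\sigma(z_t)-a(x)\sigma'(z_t)\,w_t,
\end{equation*}
regards it as an inhomogeneous version of the $v$-system, and estimates $w$ through Duhamel's formula, using $|a'(x)\sigma(z_t)|\leq m a_1 |z_t|$ together with the $H_2$-decay of $(z,z_t)$ from Item 1 of Theorem \ref{thm-as} to make the convolution term decay (with a rate adjustment $\beta_1<\beta_2$); this is precisely where the hypotheses that $a$ is continuously differentiable with $|a'|\leq a_1$, and the compatibility conditions $z_0'=z_1'=0$ (which give $w(t,0)=w(t,1)=0$), are consumed. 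You instead recover $z_{xx}$ algebraically from the equation itself, $z_{xx}=z_{tt}+a\sigma(z_t)$, so that $\Vert z_{xx}\Vert_{L^2}\leq \Vert z_{tt}\Vert_{L^2}+a_\infty m\Vert z_t\Vert_{L^2}$, where both right-hand terms already decay exponentially by your first step and by Theorem \ref{thm-as} with $p=2$. This is shorter, bypasses the second evolution-family/Duhamel argument entirely, and in fact never uses the assumptions on $a'$ nor the boundary compatibility of $z_x$, so your route establishes the statement under weaker hypotheses; what the paper's route buys in exchange is an argument that stays within the abstract LTV framework and would survive perturbations that cannot be read off the equation pointwise. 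Two caveats, both of which you flag and both of which are equally present in the paper's proof: (i) the rigorous justification that $z_t$ is a strong solution of the differentiated equation, with initial energy controlled by $\Vert (z_0,z_1)\Vert_{D_2}$, is not carried out in the paper either; (ii) Theorem \ref{thm-ltv} requires the lower bound $d_0>0$, i.e. $\min_{\xi\in[-2R,2R]}\sigma'(\xi)>0$, which does not follow from Definition \ref{def-damping} (it only guarantees $\sigma'(0)=C_1>0$) --- the paper asserts the two-sided bound on $d$ without verifying positivity, so this gap is shared, not introduced by you.
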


\section{Exponential convergence result for a linear time-varying system}
\label{sec_appendix}

The proofs of the main results are mainly based on a result on a abstract linear time-varying system with a special structure. Since these results are used all along the paper, we provide it before the section devoted to the proofs of our main results. Note also that this exponential convergence result is of independent interest, in the sense that it can applied to more general systems than the nonlinear one given in \eqref{wave-nonmonotone}.  

\subsection{Preliminaries on linear time-varying systems}

Most of the results and definitions given in this section are borrowed from \cite{chicone-latushkin1999} and \cite{pazy1983semigroups}. A linear time-varying system on a Hilbert space\footnote{In the sequel, we will identify by $I_H$ the identity operator in the Hilbert space $H$} $H$ can be written as follows 
\begin{equation}
\left\{
\begin{split}
&\frac{d}{dt} z = A(t) z,\: &t\geq \tau\\
&z(\tau) = z_\tau,\: &\tau\geq 0,
\end{split}
\right.
\end{equation}
where $A:D(A(t))\subset H \rightarrow H$ is a (possibly unbounded) operator with $D(A(t))$ dense in $H$. For simplicity of the exposition, we will assume that the domain of the operator $D(A(t))$ is independent of $t$, i.e., if $D(A(t)):=\mathcal{D}\subset H$ where 
$\mathcal{D}$ is a dense subspace of $H$.
Trajectories of a linear time-varying system, are expressed by means of a \emph{two-parameter} family $(W(\theta,\tau))_{\theta\geq \tau}$ of bounded operators called an \emph{evolution family}. The presence of the second parameter, which does not appear in the case of strongly continuous semigroups, is due to the fact that the initialization time is crucial for time-varying systems. Indeed, as illustrated in \cite{chicone-latushkin1999}, asymptotic stability of such systems has to be proven uniformly with respect to $\tau$. Otherwise, different initialization times may lead to different asymptotic stability results.  

Let us define now evolution families
\begin{definition}[\cite{chicone-latushkin1999}, Definition 3.1., page 57]
\label{def-evolution-families}
A family of operators $(W(\theta,\tau))_{\theta\geq \tau}\subset \mathcal{L}(H)$, with $\theta,\tau\in\mathbb{R}$ or $\theta,\tau\in\mathbb{R}_+$, is said to be an evolution family if
\begin{itemize}
\item[(i)] $W(\theta,\tau)=W(\theta,s)W(s,\tau)$ and $W(\theta,\theta)=I_H$, for all $\theta\geq s\geq \tau$
\item[(ii)] for each $z\in H$, the function $(\theta,\tau)\mapsto W(\theta,\tau)z$ is continuous for $\theta\geq \tau.$
\end{itemize}
\end{definition}

In contrast with autonomous abstract system, general conditions to prove that a time-dependent and unbounded operator generates an evolution family are difficult to obtain (see e.g. the discussion in \cite[page 58]{chicone-latushkin1999}). However, if the domain of the operator $A(t)$ is independent of $t$, as it is the case here, one can derive some general conditions to prove that the operator $A(t)$ generates an evolution family, see e.g., \cite[Chapter 5]{pazy1983semigroups}, \cite{chicone-latushkin1999} or \cite{tanabe1960equations} for more informations on such conditions. Note that in Section \ref{sec_convergence}, we focus on a more structured abstract systems, closely related to the problem under consideration in this paper.

\subsection{Convergence result}
\label{sec_convergence}

This section is devoted to the statement and the proof of a theorem dealing with a time-varying linear infinite-dimensional system. It is inspired by \cite[Section 3.5]{liu1996finite}, which focuses on the case of finite-dimensional systems with a structure similar to those of the wave equation when defining it as an abstract system. As it is illustrated in Section \ref{sec_as} and in the proof of Item 1. of Theorem \ref{thm-wp}, we can transform \eqref{wave-nonmonotone} as a trajectory of a linear time-varying linear infinite-dimensional system. To define the related time-varying linear system, let us introduce two Hilbert spaces $H$ and $U$ equipped respectively with the norms $\Vert \cdot \Vert_H$ and $\Vert \cdot\Vert_U$, and the scalar products $\langle \cdot,\cdot\rangle_H$ and $\langle \cdot,\cdot\rangle_U$ respectively. The system under study in this section is the following:
\begin{equation}
\label{ltv-system}
\left\{
\begin{split}
&\frac{d}{dt} v = (A-d(t)BB^\star)v\\
&v(\tau)=v_\tau,
\end{split}
\right.
\end{equation}
where $A:D(A)\subset H \rightarrow H$, with $D(A)$ the domain of the operator $A$ that we suppose densely defined in $H$, $B\in\mathcal{L}(U,H)$ and $B^\star$ denotes the adjoint operator of $B$. We also assume that the restriction of the operator $BB^\star$  to $D(A)$, that we still denote $BB^\star$ by a slight abuse of notation, is a bounded operator in $D(A)$. Finally we suppose that $d:\mathbb{R}_+\to\mathbb{R}$ is essentially bounded, i.e.,  that there exists a positive constant $d_1$ such that $|d(t)|\leq d_1$, for a.e. $t\geq 0$. We assume that $A$ and $A^\star$ are dissipative and closed, then $A$ generates a strongly continuous semigroup of contractions, that we denote by $(e^{tA})_{t\geq 0}$. For $d(t)\in\mathbb{R}$, we define the unbounded operator
$$
A_{d(t)}:=A-d(t)BB^\star,
$$
whose domain is defined as $D(A_{d(t)}):=D(A)$. In particular, it is independent of $d$ (and thus of $t$). The graph norm of this operator is defined as follows
\begin{equation}
\Vert \cdot \Vert_{D(A_{d(t)})}:=\Vert \cdot \Vert_H + \Vert A_{d(t)} \cdot\Vert_H.
\end{equation}

We want to prove that the trajectories associated to the time-varying operator
$A_{d(t)}$ are well-defined, for any $T\geq \tau\geq 0$, in $C([\tau,T];H)$ for initial conditions in $H$, and in $C([\tau,T];D(A))$ for initial conditions in $D(A)$. By the use of the Duhamel's formula, trajectories of \eqref{ltv-system}, if they exist, they can be defined for $0\leq \tau\leq T$ as follows, for all $T\geq \tau$,
\begin{equation}
\label{Duhamel}
W(T,\tau)v_\tau:=e^{(T-\tau)A}v_\tau - \int_{\tau}^T e^{(T-\tau-s)} d(s)BB^\star v(s)ds.
\end{equation}

Using the bound on the function $d$, one can show by a standard fixed-point argument together with the use of the Gronwall Lemma that the function $(T,\tau)\mapsto W(T,\tau)$ is well defined, and satisfies all the properties of an evolution family given in Definition \ref{def-evolution-families}. Solutions written as in \eqref{Duhamel} are usually called \emph{mild solutions} to \eqref{ltv-system}. Using the fact that the restriction of $BB^\star$ in $D(A)$ is bounded in $D(A)$, one can easily deduce that, if $v_\tau\in D(A)$, then, for a.e. $t\geq \tau$, one has $A_{d(t)}W(t,\tau)v_\tau\in D(A)$. This ensures in particular that there exists a unique strong solution to \eqref{ltv-system}. 

We are now in position to state the following result.
\begin{theorem}
\label{thm-ltv}
Consider the system given by \eqref{ltv-system}. Suppose that there exist positive constants $d_0$ and $d_1$ such that $d$ satisfies, for a.e. $t\geq 0$,
\begin{equation}
\label{bounds-D}
d_0 \leq d(t)\leq d_1,
\end{equation}
and assume that the origin of the following system
\begin{equation}
\label{lti-system}
\left\{
\begin{split}
&\frac{d}{dt} v = (A-d_0 BB^\star) v:=A_{d_0}v,\\
&v(0)=v_0
\end{split}
\right.
\end{equation}
is globally exponentially stable in $H$. Then, for any initial condition $v_0\in H$ initialized at $\tau=0$, the origin of \eqref{ltv-system} converges exponentially to $0$. 
\end{theorem}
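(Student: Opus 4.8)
The plan is to build an explicit strict Lyapunov functional for the time-varying system by combining two ingredients: the converse Lyapunov operator attached to the exponentially stable autonomous generator $A_{d_0}$, and the natural energy $\tfrac12\|\cdot\|_H^2$. Since, by hypothesis, the contraction semigroup $(e^{tA_{d_0}})_{t\ge 0}$ generated by $A_{d_0}=A-d_0BB^\star$ is exponentially stable, I would first set $P:=\int_0^\infty (e^{sA_{d_0}})^\star e^{sA_{d_0}}\,ds$, which is a bounded, self-adjoint, nonnegative operator on $H$ (boundedness being exactly the convergence of the integral guaranteed by exponential stability) and which satisfies the Lyapunov identity $\langle A_{d_0}v,Pv\rangle_H+\langle Pv,A_{d_0}v\rangle_H=-\|v\|_H^2$ for every $v\in D(A)$. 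Writing $A_{d(t)}=A_{d_0}-(d(t)-d_0)BB^\star$ and differentiating $V(v):=\langle Pv,v\rangle_H$ along a strong solution of \eqref{ltv-system} then yields $\dot V=-\|v\|_H^2-(d(t)-d_0)\,Q(v)$, where $Q(v):=2\langle B^\star v,B^\star Pv\rangle_U$.

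The \emph{main obstacle} is that $Q(v)$ carries no definite sign, because $P$ and $BB^\star$ need not commute, so $V$ by itself is not monotone along trajectories. This is where the energy $E(v):=\tfrac12\|v\|_H^2$ enters: using the dissipativity of $A$ together with $d(t)\ge d_0$ one gets $\dot E=\langle Av,v\rangle_H-d(t)\|B^\star v\|_U^2\le -d_0\|B^\star v\|_U^2$, which supplies precisely the negative term in $\|B^\star v\|_U$ that $\dot V$ lacks. I would therefore work with $\mathcal V:=V+\mu E$ for a large parameter $\mu>0$. Bounding $|Q(v)|\le 2\|B^\star\|\,\|P\|\,\|B^\star v\|_U\,\|v\|_H$, splitting this product by Young's inequality, and using $d(t)-d_0\le d_1-d_0$, I would obtain $\dot{\mathcal V}\le(-1+c/\eta)\|v\|_H^2+(c\eta-\mu d_0)\|B^\star v\|_U^2$ with $c:=(d_1-d_0)\|B^\star\|\,\|P\|$. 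Choosing first $\eta=2c$, so that $-1+c/\eta=-\tfrac12$, and then $\mu\ge 2c^2/d_0$, so that $c\eta-\mu d_0\le 0$, makes both coefficients nonpositive and gives $\dot{\mathcal V}\le-\tfrac12\|v\|_H^2$.

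To close the argument it remains to observe that, since $P\ge 0$, the functional $\mathcal V$ is coercive solely thanks to the energy term, namely $\tfrac{\mu}{2}\|v\|_H^2\le\mathcal V(v)\le(\|P\|+\tfrac{\mu}{2})\|v\|_H^2$; in particular I never need $P$ itself to be coercive, only bounded and nonnegative. The differential inequality then upgrades to $\dot{\mathcal V}\le-\beta\,\mathcal V$ with $\beta:=(2\|P\|+\mu)^{-1}$, and a Gronwall estimate yields $\mathcal V(v(t))\le e^{-\beta(t-\tau)}\mathcal V(v(\tau))$, hence exponential decay of $\|v(t)\|_H$ with an explicit rate. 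Finally, I would justify these computations first for $v_\tau\in D(A)$, where $v(t)\in D(A)$ is a strong solution and $t\mapsto\mathcal V(v(t))$ is absolutely continuous (the a.e. differentiability being enough since $d$ is merely essentially bounded), and then pass to an arbitrary initial condition $v_0\in H$ at $\tau=0$ by density of $D(A)$ in $H$ and strong continuity of the evolution family $W(t,\tau)$, which propagates the decay estimate to the limit.
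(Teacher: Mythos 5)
Your proof is correct and takes essentially the same route as the paper's: a converse Lyapunov operator $P$ for $A_{d_0}$ (the paper invokes Datko's theorem, you construct $P$ explicitly via the Datko integral, which is the same thing) augmented by a large multiple of the energy $\Vert v\Vert_H^2$, with the indefinite cross term absorbed using $d(t)\geq d_0$, dissipativity of $A$, and a Young/completing-the-square step, followed by a Gronwall estimate and a density argument from $D(A)$ to $H$. The only differences are cosmetic: your Lyapunov identity is normalized with constant $C=1$, and you make the paper's implicit square-completion explicit through the parameter $\eta$.
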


\begin{proof}\textbf{ of Theorem \ref{thm-ltv}:}
Since the origin of \eqref{lti-system} is globally exponentially stable, then, due to \cite{datko1970extending}, there exist a self-adjoint operator $P\in\mathcal{L}(H)$ and a positive constant $C$ such that the following inequality holds true
\begin{equation}
\label{lyap-d0}
\langle PA_{d_0} v,v\rangle_H + \langle Pv,A_{d_0}v\rangle_H\leq -C\Vert v\Vert_H^2,\: \forall v\in D(A)
\end{equation}
Moreover, note that $A_{d_0}$ is also a dissipative operator, which means in particular that
\begin{equation}
\langle A_{d_0} v,v\rangle_H + \langle v,A_{d_0}v\rangle_H \leq 0,\: \forall v\in D(A).
\end{equation}

Now, consider the following candidate Lyapunov functional for \eqref{ltv-system}:
\begin{equation}
V(v):=\langle Pv,v\rangle_H + M\Vert v\Vert_H^2,
\end{equation}
where $M$ is a positive constant which has to be defined. By relying on a usual density argument, it is enough to prove exponential stability for strong solutions. The time derivative of $V$ along the solutions to \eqref{ltv-system} starting in $D(A)$ yields
\begin{equation}
\begin{split}
\frac{d}{dt} V(v)= &\langle (P+MI_H)A_{d(t)}v,v\rangle_H + \langle (P+MI_H)v,A_{d(t)} v\rangle_H\\
= & \langle PA_{d_0} v,v\rangle_H + \langle Pv,A_{d_0}v\rangle_H + M(\langle A_{d_0} v, v\rangle_H + \langle v,A_{d_0}v\rangle_H)\\
 & -  \langle (d(t) - d_0 I_H)BB^\star v, (P+MI_H)v\rangle_H -\langle (P+MI_H)v, (d(t)-d_0I_H)BB^\star v\rangle_H.\\
\leq & -C\Vert v\Vert_H^2 - d_0M\Vert B^\star v\Vert^2_{U} + 2(d_1-d_0)\Vert B^\star P\Vert_{\mathcal{L}(H,U)}\Vert B^\star v\Vert_{U} \Vert v\Vert_H \\
&-2M \langle (d(t)-d_0)B^\star v,B^\star v\rangle_U,
\end{split}
\end{equation}
where, in the third line, we have used the dissipativity of the operator $A$ and the Lyapunov inequality \eqref{lyap-d0}. 
 
Since $d(t)\geq d_0$ and $M>0$, one has $2M \langle (d(t)-d_0)B^\star v,B^\star v\rangle_U\geq 0$. By taking 
$$
M\geq \frac{2(d_1-d_0)^2\Vert B^\star P\Vert^2_{\mathcal{L}(H,U)}}{Cd_0},
$$
one obtains that 
$$
\frac{d}{dt} V(v) \leq - \frac{C}{2}\Vert v\Vert^2_H -\left(\sqrt{\frac{C}{2}} \Vert v\Vert_H^2 - \sqrt{\frac{2(d_1-d_0)^2\Vert B^\star P\Vert^2_{\mathcal{L}(H,U)}}{C}}\Vert B^\star v\Vert_U^2\right)^2,
$$ and therefore
\begin{equation}
\frac{d}{dt} V(v) \leq -\frac{C}2\Vert v\Vert_H^2,\quad \forall v\in D(A).
\end{equation}
Since $V$ satisfies the following inequalities
\begin{equation}
M\Vert v\Vert^2_H\leq V(v) \leq (\Vert P\Vert_{\mathcal{L}(H)} + M)\Vert v\Vert^2_H
\end{equation}
one can conclude that 
\begin{equation}
\Vert v\Vert_H^2 \leq \frac{\Vert P\Vert_{\mathcal{L}(H)} + M}{M}\exp\left(-\frac{C}{2(\Vert P\Vert_{\mathcal{L}(H)}+M)}t\right) \Vert v_0\Vert_H^2,
\end{equation}
which ends the proof of Theorem \ref{thm-ltv}.
\end{proof}

\section{Proof of the main results}

\label{sec_proof}

\subsection{Proof of Theorem \ref{thm-wp}}

We now address the issue of well-posedness analysis of the wave equation \eqref{wave-nonmonotone} in the $L^p$ functional setting, for any $p\neq 2$. As mentionned in \cite{haraux1D,peral1980lp}, such a result does not hold for the wave equation with a dimension higher or equal to $2$ even with a monotone damping. 

This section is divided into two parts. The first part is devoted to the proof of Item 1. of Theorem \ref{thm-wp}, while the second one proves Item 2. of Theorem \ref{thm-wp}. Recall that Item 2. of Theorem needs us to consider a quite general nonlinearity, while Item 2. is devoted to the case where this nonlinearity is linearly bounded.

\begin{proof}\textbf{ of Item 1. of Theorem \ref{thm-wp}:} 

The proof of Item 1. of Theorem \ref{thm-wp} relies on the following wave equation with a source term
\begin{equation}
\label{wave-source}
\left\{
\begin{split}
&z_{tt}(t,x) = z_{xx}(t,x) +h(t,x),\: (t,x)\in\mathbb{R}_+\times [0,1]\\
&z(t,0)=z(t,1) = 0,\: t\in\mathbb{R}_+\\
&z(0,x)= z_0(x),\: z_t(0,x) = z_1(x),\: x\in [0,1],
\end{split}
\right.
\end{equation}
where $h$ denotes the source term. From \cite[Theorem 1.3.8]{HarauxBook}, we know that, provided that $z_0,z_1\in H_2$ and that $h\in L^2(\mathbb{R}_+;L^2(0,1))$, there exists a unique solution $z\in C(\mathbb{R}_+;H_0^1(0,1))\cap C^1(\mathbb{R}_+;L^2(0,1))$ to \eqref{wave-source}. In particular, since $H_\infty(0,1)\subset H_2(0,1)$, this result holds true also for initial conditions $(z_0,z_1)\in H_{\infty}(0,1)$. The first step of our analysis in this section is to prove that picking initial conditions in $H_\infty(0,1)$ and the source term $h\in L^2(\mathbb{R}_+;L^\infty(0,1))$ improves also the regularity of the solution $z$ itself. 

To do so, our aim is to give an explicit formula for the latter equation, using the reflection method surveyed in \cite{strauss1992partial}. Roughly speaking, this method consists in extending the explicit formulation of trajectory of the wave equation in a bounded domain to the whole real line. We extend the initial conditions to be odd with respect to both $x=0$ and $x=1$, that is
\begin{equation}
\tilde{z}_0(-x) = -\tilde{z}_0(x)\text{ and } \tilde{z}_0(2-x)=-\tilde{z}_0(x),
\end{equation}
where $\tilde{z}_0$ denotes the $2$-periodic odd extension of $z_0$, i.e., 
\begin{equation}
\tilde{z}_0(x) = \left\{
\begin{split}
&z_0(x),\: 0<x<1\\
&-z_0(-x),\: -l<x<0\\
&\text{extended to be of period $2$.} 
\end{split}
\right.
\end{equation}
We can define similarly a $2$-periodic odd extension of $z_1$ (resp. $h$), denoted by $\tilde{z}_1$ (resp. $\tilde{h}$). Thanks to \cite[Theorem 1, Page 69]{strauss1992partial}, we can therefore define the explicit trajectory $z$ of \eqref{wave-source} (known as the D'Alembert formula) as follows:
\begin{equation}
\begin{split}
z(t,x) = &\frac{1}{2} \left[\tilde{z}_0(x+t)+\tilde{z}_0(x-t)\right] + \frac{1}{2}\int_{x-ct}^{x+ct}\tilde{z}_1(s) ds + \frac{1}{2}\int_0^t\int_{x-(t-s)}^{x+(t-s)} \tilde{h}(w,s)dwds. 
\end{split}
\end{equation}
We can further define $z_t$ as follows
\begin{equation}
\begin{split}
z_t(t,x) =& \frac{1}{2} (\tilde{z}_0^\prime(x+t)-\tilde{z}_0^\prime(x-t)) + \frac{1}{2}\left(\tilde{z}_1(x+t) - \tilde{z}_1(x-t)\right)\\
&+\frac{1}{2}\int_0^t \left(\tilde{h}(s,x+(t-s)) -\tilde{h}(s,x-(t-s))\right) ds 
\end{split}
\end{equation}
It is clear from these two latter equations that, when picking $(z_0,z_1)\in H_\infty(0,1)$ and $h\in L^2(\mathbb{R}_+;L^\infty(0,1))$, then $$z\in C(\mathbb{R}_+;W^{1,\infty}(0,1))\cap C^1(\mathbb{R}_+;L^\infty(0,1)).$$
We assume now that $\tilde{h}$ is written as follows
\begin{equation}
\label{source-term}
\tilde{h}(t,x):=-\tilde{a}(x)\sigma(y(t,x)),
\end{equation}
where $\tilde{a}$ is a $2$-periodic extension of $a$, $y\in L^2(0,T;L^2(0,1))$ and $\sigma$ is a scalar nonlinear damping (which is odd, due to Item 1 of Definition \ref{def-damping}). In particular, it means that $h\in L^2(\mathbb{R}_+;L^2(0,1))$. The proof of Theorem \ref{thm-wp} consists first in applying a fixed-point theorem, which will allow us to prove the well-posedness of \eqref{wave-nonmonotone} for a small time $T>0$ and second in using a stability result in \cite{haraux1D}, stated as follows.
\begin{proposition}[\cite{haraux1D}]
Let us consider initial condition $z_0,z_1\in H_\infty(0,1)$. If there exists a solution to \eqref{wave-nonmonotone}, therefore the time derivative of the following functional along the trajectories of \eqref{wave-nonmonotone}
\begin{equation}
\Phi(t):=\int_{0}^1 \left[F(z_t+z_x)+F(z_t-z_x)\right](t,x)dx,
\end{equation}
with $F$ any even and convex function, satisfies
\begin{equation*}
\frac{d}{dt}\Phi(t)\leq 0.
\end{equation*}
\end{proposition}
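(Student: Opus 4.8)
The plan is to pass to the Riemann (characteristic) variables $p := z_t + z_x$ and $q := z_t - z_x$, in which the d'Alembertian decouples into two transport operators sharing a common source. Differentiating these definitions and inserting $z_{tt} = z_{xx} - a(x)\sigma(z_t)$ (and using that the mixed partials commute) I would record
\begin{equation*}
p_t - p_x = -a(x)\sigma(z_t), \qquad q_t + q_x = -a(x)\sigma(z_t),
\end{equation*}
the key point being that $z_t = \tfrac{1}{2}(p+q)$, so that both source terms coincide and depend only on $p+q$. Differentiating $\Phi$ under the integral sign and substituting $p_t = p_x - a\sigma(z_t)$ and $q_t = -q_x - a\sigma(z_t)$ then splits the derivative as
\begin{equation*}
\frac{d}{dt}\Phi(t) = \int_0^1 \big[ F'(p)\, p_x - F'(q)\, q_x \big]\,dx - \int_0^1 a(x)\,\sigma(z_t)\,\big[ F'(p) + F'(q) \big]\,dx,
\end{equation*}
a conservative (transport) part and a dissipative (source) part.

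For the conservative part I would integrate by parts, noting $F'(p)p_x = \partial_x F(p)$ and $F'(q)q_x = \partial_x F(q)$, so that it equals $\big[F(p) - F(q)\big]_{x=0}^{x=1}$. This is where the Dirichlet conditions enter: from $z(t,0)=z(t,1)=0$ one gets $z_t(t,0)=z_t(t,1)=0$, whence at each endpoint $p = z_x = -q$; since $F$ is even, $F(p) = F(q)$ at both $x=0$ and $x=1$ and the whole boundary contribution cancels.

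It then remains to check that the dissipative part is nonpositive, i.e. that $\big[F'(p)+F'(q)\big]\,\sigma\!\big(\tfrac{p+q}{2}\big) \geq 0$ pointwise (recall $a \geq 0$). Because $F$ is even and convex, $F'$ is odd and nondecreasing; hence $p+q>0$ gives $p > -q$ and so $F'(p) \geq F'(-q) = -F'(q)$, i.e. $F'(p)+F'(q) \geq 0$, with the reverse inequality when $p+q<0$. Thus $F'(p)+F'(q)$ carries the sign of $p+q$, which by Item~2 of Definition~\ref{def-damping} is exactly the sign of $\sigma(z_t)$; the product is therefore nonnegative and $\frac{d}{dt}\Phi(t) \leq 0$.

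The main obstacle is regularity, not algebra. For $(z_0,z_1)\in H_\infty(0,1)$ the solution is only Lipschitz in $(t,x)$, so $p$ and $q$ are merely in $L^\infty$, and the $t$-differentiation under the integral, the chain rule, and the integration by parts are not literally justified at this level. I would therefore first establish the inequality for smooth (or sufficiently regular, e.g. $D_2(0,1)$) data, where every manipulation is classical, and then recover the general case by density together with the continuity of $\Phi$ in the relevant topology. Should $F$ fail to be $C^1$, I would further approximate it by even convex $C^1$ functions, the sign arguments being insensitive to replacing $F'$ by a nondecreasing odd selection of its subdifferential.
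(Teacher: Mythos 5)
Your proposal is correct, but there is nothing in the paper to compare it against: the paper never proves this proposition, it simply quotes it from \cite{haraux1D} (just as it quotes the linear time-varying version, Proposition \ref{corollary-Haraux}, from Haraux's Corollary 2.2). What you have done is reconstruct the cited external argument, and your reconstruction is the right one. Passing to the Riemann invariants $p=z_t+z_x$, $q=z_t-z_x$, which satisfy $p_t-p_x=q_t+q_x=-a(x)\sigma(z_t)$ with the common source depending only on $z_t=\tfrac12(p+q)$; cancelling the boundary flux $[F(p)-F(q)]_{x=0}^{x=1}$ via $z_t(t,0)=z_t(t,1)=0$ and the evenness of $F$; and then noting that $F'(p)+F'(q)$ carries the sign of $p+q$ (oddness plus monotonicity of $F'$) while $\sigma(\tfrac{p+q}{2})$ carries that same sign by Item 2 of Definition \ref{def-damping}, so that the source term is dissipative since $a\geq 0$ --- this is exactly the classical mechanism behind Haraux's result, and it also proves Proposition \ref{corollary-Haraux} verbatim upon replacing $\sigma(z_t)$ by $d(t,x)z_t$ with $d\geq 0$.

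Two remarks on the caveats you raise yourself. First, the regularity gap is real and your remedy is the standard one, but note that it has genuine technical content for the \emph{nonlinear} equation: proving the inequality for regular (say $D_2(0,1)$) data and passing to the limit requires continuous dependence of solutions to \eqref{wave-nonmonotone} on the initial data in the $H_2$ topology together with the uniform $L^\infty$ bounds, so that $F(p_n)\to F(p)$ in $L^1(0,1)$ by dominated convergence; this is available because $\sigma$ is locally Lipschitz, and the honest conclusion at the low regularity level is the monotonicity statement ``$\Phi$ is nonincreasing'' (as in Proposition \ref{corollary-Haraux}) rather than pointwise differentiability of $\Phi$ with nonpositive derivative. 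Second, your approximation of non-$C^1$ even convex $F$ is more generality than this paper ever uses: the two functions invoked later, $F(s)=[\mathrm{Pos}(|s|-c)]^2$ and $F(s)=|s|^p/p$ with $p\geq 2$, are both $C^1$, so for the purposes of the paper the $C^1$ case suffices.
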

The latter proposition implies that
\begin{equation}
\Phi(t)\leq \Phi(0),\quad \forall t\geq 0.
\end{equation}
In particular, following the discussion in the proof of Corollary 2.3. in \cite{haraux1D}, if one picks $F(s)=[\mathrm{Pos}(|s|-2\max(\Vert z_0\Vert_{L^\infty(0,1)}, \Vert z_1\Vert_{L^\infty(0,1)}))]^2$, where the function $\mathrm{Pos}\: : s\in\mathbb{R}\rightarrow \mathrm{Pos}(s)\in\mathbb{R}_+$ is defined as follows:
\begin{equation*}
\mathrm{Pos}(s)=\left\{
\begin{split}
&s\text{ if } s>0\\
&0\text{ if } s\leq 0,
\end{split}
\right.
\end{equation*} then one obtains that

\begin{equation}
\Phi(z,z_t)=0,
\end{equation}
which implies that, for all $t\geq 0$
\begin{equation}
\max(\Vert z_x(t,\cdot)\Vert_{L^\infty(0,1)},\Vert z_t(t,\cdot)\Vert_{L^\infty(0,1)})\leq 2\max(\Vert z_0^\prime\Vert_{L^\infty(0,1)}, \Vert z_1\Vert_{L^\infty(0,1)}).
\end{equation}
Noticing that $$\Vert (z,z_t)\Vert_{H_\infty(0,1)}\leq \max(\Vert z_x(t,\cdot)\Vert_{L^\infty(0,1)},\Vert z_t(t,\cdot)\Vert_{L^\infty(0,1)}),$$ it is clear then that, for all $t\geq 0$
\begin{equation}
\label{estimate-Haraux}
\Vert (z,z_t)\Vert_{H_\infty(0,1)} \leq 2\max(\Vert z_0^\prime \Vert_{L^\infty(0,1)}, \Vert z_1\Vert_{L^\infty(0,1)})
\end{equation}
This estimate implies that, once one is able to prove that there exists a solution $(z,z_t)$ of \eqref{wave-nonmonotone} in $$L^\infty([0,T];W^{1,\infty}(0,1))\times L^\infty([0,T];L^\infty(0,1)),$$ for a small time $T>0$, then the well-posedness of \eqref{wave-nonmonotone} is ensured in $L^\infty(\mathbb{R}_+;W^{1,\infty}(0,1))\times L^\infty(\mathbb{R}_+;L^\infty(0,1))$.



For $T>0$, let us define $\mathcal{F}_T$ the space of measurable functions defined on $[0,T]\times\mathbb{R}$ which are bounded, odd and $2$-periodic in space. We endow $\mathcal{F}_T$ with the $L^\infty$-norm so that it becomes a Banach space. Hence, denoting by $\Vert \cdot\Vert_{T}$ the norm of the latter functional space, we have, for every $y\in\mathcal{F}_T$
\begin{equation}
\Vert y\Vert_T:=\sup_{(t,x)\in [0,T]\times\mathbb{R}} |y(t,x)|.
\end{equation}
Let us consider $\mathbf{B}_K(y)$ the closed ball in $\mathcal{F}_T$ centered at $y\in\mathcal{F}_T$ of radius $K\geq 0$, where $K$ remains to be defined. 
We define the mapping with which we will apply a fixed-point
\begin{equation}
\begin{split}
\Phi_T : \mathcal{F}_T &\rightarrow \mathcal{F}_T\\
y &\mapsto \Phi_T(y),
\end{split}
\end{equation}
where 
\begin{equation}
\begin{split}
&\Phi_T(y) = \frac{1}{2}\left(\tilde{z}^\prime_0(x+t) - \tilde{z}_0^\prime(x-t)\right) + \frac{1}{2}\left(\tilde{z}_1(x+t)-\tilde{z}_1(x-t)\right)\\
& -\frac{1}{2}\int_0^t \left(\tilde{a}(x+t-s)\sigma(y(s,x-(t-s))+ \tilde{a}(-x+t-s)\sigma(y(s,-x-(t-s))\right)ds.
\end{split}
\end{equation}
Using the fact that $\sigma$ is locally Lipschitz, note that, for every $y_0\in\mathcal{F}_T$ and $K>0$, there exists a positive constant $C(y_0,K)$ such that, for every $y,\tilde{y}\in \mathbf{B}_K(y_0)$ 
\begin{equation}
\label{contraction}
\Vert \Phi_T(y)-\Phi_T(\tilde{y})\Vert_T\leq C(y_0,K)T\Vert y-\tilde{y}\Vert_{T}.
\end{equation}
Choose $K:=2\left(\Vert \Phi_T(y_0)\Vert_T + \Vert y_0\Vert_T\right)$
and $T$ small enough such that
\begin{equation}
\label{choice-T}
C(y_0,K)T\leq 1.
\end{equation}
Consider a sequence $(y_n)_{n\in\mathbb{N}}$ defined as 
\begin{equation*}
y_{n+1} = \Phi_T(y_n).
\end{equation*}

If this sequence converges to some $y^\star$, then \eqref{contraction} yields $y^\star$ is a fixed point for the mapping $\Phi_T$, which implies in particular that \eqref{wave-nonmonotone} is well-posed in the desired functional spaces. To prove the convergence of this sequence, it is enough to show that it is a Cauchy sequence. By induction, one can prove that
\begin{equation}
\label{inequality-contraction}
\Vert y_{n+1}-y_{n}\Vert_T\leq (C(y_0,K)T)^n \Vert y_0-y_1\Vert_T
\end{equation} 
and
\begin{equation}
\label{ball-property}
y_n\in \mathbf{B}_K(y_0).
\end{equation}
Indeed, thanks to the choice of $T$ in \eqref{choice-T}, these two properties are easily proved for $n=0$ and, moreover, we have with \eqref{contraction}, for all $n\geq 1$
\begin{equation}
\begin{split}
\Vert y_{n+1}-y_n\Vert_T=&\Vert \Phi(y_n)-\Phi(y_{n-1})\Vert_T\\
\leq &C(y_0,K)T\Vert \Phi(y_{n-1})-\Phi(y_{n-2})\Vert_T\\
\leq &(C(y_0,K)T)^{n}\Vert y_0-y_1\Vert_T.
\end{split} 
\end{equation}
The inequality \eqref{inequality-contraction} can be deduced from the above inequality. The property \eqref{ball-property} can be proved as follows:
\begin{equation}
\begin{split}
\Vert y_{n+1}\Vert \leq & \Vert y_n\Vert_T + \Vert y_1\Vert_T + \Vert y_0\Vert_T\\
\leq & \Vert y_0\Vert_T + \Vert \Phi_T(y_0)\Vert_T + \Vert \Phi_T(y_0)\Vert_T + \Vert y_0\Vert_T\leq K,\\
\end{split}
\end{equation}
where in the first line we have used \eqref{inequality-contraction} and, in the second line, we have used the fact that $y_n\in\mathbf{B}_K(y_0)$. 

The two properties \eqref{inequality-contraction} and \eqref{ball-property} show that the sequence $(y_n)_{n\in\mathbb{N}}$ is a Cauchy sequence. Since $\mathbf{B}_K(y_0)$ is a complete set, this sequence is therefore convergent. This means in particular that there exists a fixed-point to the mapping $\Phi_T(y_0)$, which implies that, for sufficiently small time $T$, there exists a unique solution $z\in L^\infty(0,T;W^{1,\infty}(0,1))$ and $z_t\in L^\infty(0,T;L^\infty(0,1))$. Thanks to \eqref{estimate-Haraux}, we can deduce that there exists unique solution $z\in L^\infty(\mathbb{R}_+;W^{1,\infty}(0,1))$ and $z_t\in L^\infty(\mathbb{R}_+;L^\infty(0,1))$. This concludes the proof of Item 1. of Theorem \ref{thm-wp}.
\end{proof}

\begin{proof}\textbf{ of Item 2. of Theorem \ref{thm-wp}:}

The proof of Item 2. of Theorem \ref{thm-wp} relies mainly on the following result.
\begin{proposition}[\cite{haraux1D}, Corollary 2.2.]
\label{corollary-Haraux}
Let $z\in C(\mathbb{R}_+,H^1_0(0,1))\cap C(\mathbb{R}_+,L^2(0,1))$ be a solution to
\begin{equation}
\label{ltv-wave-haraux}
\left\{
\begin{split}
&z_{tt}(t,x) - z_{xx}(t,x) + d(t,x)z_t =0,\: (t,x)\in\mathbb{R}_+\times [0,1]\\
&z(t,0) = z(t,1) = 0,\: t\in\mathbb{R}_+,
\end{split}
\right.
\end{equation}
with $d=d(t,x)\in L^\infty_{\mathrm{loc}}(\mathbb{R}_+,L^\infty(0,1)),\: a\geq 0 \text{ a.e. in }\mathbb{R}_+\times [0,1]$. Then, for any even, convex function $F\in C^1(\mathbb{R}$, the function
\begin{equation}
\Phi(t):=\int_0^1 [F(z_t+z_x)+F(z_t-z_x)](t,x) dx,
\end{equation} 
is non-increasing for all $t\geq 0$, whenever $\Phi(0)$ is finite. 
\end{proposition}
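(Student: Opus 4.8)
The plan is to diagonalize the damped wave operator into its left- and right-moving components and then exploit the interplay between the evenness and convexity of $F$ and the Dirichlet boundary conditions. I would introduce the Riemann invariants $p := z_t + z_x$ and $q := z_t - z_x$, so that $z_t = \tfrac{p+q}{2}$ and $\Phi(t) = \int_0^1 [F(p)+F(q)]\,dx$. Substituting $z_{tt} = z_{xx} - d\,z_t$ from \eqref{ltv-wave-haraux} and using the equality of mixed partials, a direct computation shows that $p$ and $q$ satisfy the transport-type equations
$$
p_t - p_x = -d\,z_t, \qquad q_t + q_x = -d\,z_t .
$$
These are what make $\Phi$ tractable: along the characteristics the damping enters only through the common term $-d\,z_t$.

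First I would differentiate $\Phi$ under the integral sign and insert the two transport equations, which gives
$$
\frac{d}{dt}\Phi(t) = \int_0^1 \big[F'(p)\,p_x - F'(q)\,q_x\big]\,dx \;-\; \int_0^1 d\,z_t\,[F'(p)+F'(q)]\,dx .
$$
The first integrand is the exact spatial derivative $\partial_x[F(p)-F(q)]$, so it contributes only the boundary values $[F(p)-F(q)]_{x=0}^{x=1}$. Here the two hypotheses combine: since $z(t,0)=z(t,1)=0$ for all $t$, differentiating in time forces $z_t=0$ at $x=0,1$, whence $p = z_x$ and $q = -z_x$ at the endpoints; because $F$ is \emph{even}, $F(p) = F(z_x) = F(-z_x) = F(q)$ there, so the boundary term vanishes identically.

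It then remains to sign the second integral. Writing $z_t = \tfrac{p+q}{2}$, its integrand is $d \cdot \tfrac{p+q}{2}\,[F'(p)+F'(q)]$. Since $F$ is even and $C^1$, $F'$ is odd, and since $F$ is convex, $F'$ is nondecreasing. A short case analysis on the sign of $p+q$ then shows $\tfrac{p+q}{2}\,[F'(p)+F'(q)] \geq 0$ pointwise: if $p > -q$ then $F'(p) \geq F'(-q) = -F'(q)$, so $F'(p)+F'(q)\geq 0$, while if $p < -q$ the reverse inequality holds, and the two factors always share a sign. With $d \geq 0$ a.e., the whole integral is nonnegative, hence $\tfrac{d}{dt}\Phi \leq 0$, which is the assertion.

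The main obstacle is regularity rather than the algebra: for a solution in the stated weak class the steps above (differentiating $\Phi$, integrating by parts, taking boundary traces, and assuming $z_{xt}=z_{tx}$) are not directly justified, and $\Phi$ need not even be finite. I would therefore first establish the inequality for smooth solutions with a regularized, nonnegative $d$, where every manipulation is classical, and then recover the general case by a density argument in the natural energy spaces, using continuous dependence of the solution on the data together with lower semicontinuity/continuity of $\Phi$ in $(z,z_t)$. This approximation step is precisely where the hypothesis that $\Phi(0)$ be finite is used, since it guarantees $\Phi$ stays finite along the approximation and passes to the limit.
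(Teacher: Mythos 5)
The paper does not prove this proposition at all --- it is quoted verbatim from the cited source (Haraux, Corollary 2.2) and used as a black box, so there is no in-paper proof to compare against. Your argument is correct and is essentially the standard proof of that result: the Riemann invariants $p=z_t+z_x$, $q=z_t-z_x$ satisfy $p_t-p_x=q_t+q_x=-d\,z_t$, the boundary flux $[F(p)-F(q)]_{x=0}^{x=1}$ dies because $z_t$ vanishes at the endpoints and $F$ is even, and the bulk term has a sign because $F'$ is odd (evenness) and nondecreasing (convexity), so $\tfrac{p+q}{2}\,[F'(p)+F'(q)]\geq 0$ pointwise; with $d\geq 0$ this gives $\tfrac{d}{dt}\Phi\leq 0$. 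You are also right that the only genuinely delicate content is the final regularization/density step (smooth compatible data, truncated or Lipschitz-approximated $F$, then Fatou/monotone convergence to handle $\Phi(0)<\infty$), which you flag and sketch plausibly rather than carry out; that is where the hypothesis ``$\Phi(0)$ finite'' and the weak solution class actually enter, exactly as in the original reference.
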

Consider now any trajectory $(z,z_t)$ of \eqref{wave-nonmonotone}. One can then see that such a trajectory is also a trajectory of the following linear time-varying system of the 
form \eqref{ltv-wave-haraux} given by
\begin{equation}
\label{ltv-wave}
\left\{
\begin{split}
&y_t(t,x) = y_{xx}(t,x) - d(t,x)y_t(t,x),\: (t,x)\in\mathbb{R}_+\times [0,1]\\
&y(t,0) = y(t,1) = 0,\: t\in\mathbb{R}_+\\
&y(0,x) = y_0(x), \: y_t(0,x) = y_1(x),\: x\in [0,1]
\end{split}
\right.
\end{equation}
with
\begin{equation}
\label{d-wave}
d(t,x) = \left\{
\begin{split}
&\frac{\sigma(z_t)}{z_t},\quad &\text{ if } z_t\neq 0,\\
&C_1,\quad &\text{ if } z_t = 0,
\end{split}
\right. 
\end{equation}
where one must take $y(0,\cdot) = z_0(\cdot)$ and $y_t(0,\cdot) = z_1(\cdot)$ to 
recover $(z,z_t)$ and where $C_1$ is the constant given in Item 3 of Definition \ref{def-damping}. Note that the function $d$ is clearly bounded in a neighborhood of $z_t=0$ since $\sigma$ is differentiable at $0$, as stated in Item 3 of Definition \ref{def-damping}. In order to apply Proposition \ref{corollary-Haraux}, one needs to check whether 
\begin{itemize}
\item[1.] $d(t,x)$ is nonnegative for a.e. $(t,x)\in\mathbb{R}_+\times [0,1]$, 
\item[2.] $d\in L^\infty_{\mathrm{loc}}(\mathbb{R}_+,L^\infty(0,1))$
\end{itemize}
Then, it is easy to see that the proof of Item 2. of Theorem \ref{thm-wp} follows by setting $F(s)=\frac{|s|^p}{p}$ and by considering initial conditions $(z_0,z_1)\in H_p(0,1)$, which implies in particular that $\Phi(0)$, given in Proposition \ref{corollary-Haraux}, is finite. Indeed, it is clear that there exists a positive constant $C$ such that
\begin{equation}
\Phi(0) \leq C \Vert (z_0,z_1)\Vert_{H_p(0,1)}. 
\end{equation}

Because of Item 2 of Definition \ref{def-damping}, one has that
\begin{equation}
d(t,x) \geq 0,\: \forall (t,x)\in\mathbb{R}_+\times [0,1]
\end{equation}
Since the damping under consideration is linearly bounded, then
\begin{equation}
d(t,x) \leq m.
\end{equation}
This implies in particular that $d\in L^\infty(\mathbb{R}_+,L^\infty(0,1)$. Hence, applying Proposition \ref{corollary-Haraux} allows one to conclude the proof of Item 2. of Theorem \ref{thm-wp}. 
\end{proof}

\subsection{Proof of Theorem \ref{thm-as}}
\label{sec_as}

The proof of Theorem \ref{thm-as} is divided into two steps: first, we associate with any trajectory of \eqref{wave-nonmonotone} a system of the form \eqref{ltv-wave-haraux}, so that the trajectory of \eqref{wave-nonmonotone} is  a trajectory of the associated linear time-varying system. One then applies Theorem \ref{thm-ltv} for the space $H_2(0,1)$, which is indeed the only Hilbert space among all the spaces $H_p(0,1)$. Second, using the fact that the solutions are bounded in $H_\infty(0,1)$ (resp. $H_p(0,1)$, with finite $p$) thanks to Theorem \ref{thm-wp}, and invoking the Riesz-Thorin theorem (see e.g., \cite[Theorem 1.1.1, Page 2]{bergh2012interpolation}), we conclude for each Item of Theorem \ref{thm-as}. 

\begin{proof}\textbf{ of Item 1. of Theorem \ref{thm-as}:}

\textbf{First step: semi-global exponential stability in $H_2$.}
We first fix $p=2$, but still consider the initial conditions $(z_0,z_1)\in H_\infty(0,1)$. Moreover, given a positive constant, we consider that the initial conditions are such that:
\begin{equation}
\Vert (z_0,z_1)\Vert_{H_\infty(0,1)} \leq R.
\end{equation}
In particular, due to Theorem \ref{thm-wp}, one has
\begin{equation}
\Vert (z,z_t)\Vert_{H_\infty(0,1)} \leq 2R,\quad \forall t\geq 0.
\end{equation}
Given a solution $(z,z_t)$ of \eqref{wave-nonmonotone} with initial condition as above, one can see that it is a solution of the linear time-varying system given by 
\begin{equation}
\label{wave-ltv}
\left\{
\begin{split}
&y_{tt}(t,x) = y_{xx}(t,x)-d(t,x)a(x)y_t,\: (t,x)\in\mathbb{R}_+\times [0,1]\\
&y(t,0) = y(t,1) = 0,\: t\in\mathbb{R}_+\\
\end{split}
\right.
\end{equation} 
with 
\begin{equation}
\label{d-wave}
d(t,x) = \left\{
\begin{split}
&\frac{\sigma(z_t(t,x))}{z_t(t,x)},\quad &\text{ if } z_t(t,x)\neq 0,\\
&C_1,\quad &\text{ if } z_t(t,x) = 0,
\end{split}
\right. 
\end{equation}
which is well defined. The solution $(z,z_t)$ of \eqref{wave-nonmonotone} is the trajectory of \eqref{d-wave} corresponding to the initial conditions $y(0,x) = z_0(x)$ and $y_t(0,x) = z_1(x)$.

The system given by \eqref{wave-ltv} is in the form \eqref{ltv-system}, with $H=H_2(0,1)$, $U=H_2(0,1)$, $D(A) = D_2(0,1)$, and the operators $A$ and $B$ defined as follows
\begin{equation}
\label{abstract-ltv-wave}
\begin{split}
A : D(A) \subset H &\rightarrow H\\
\begin{bmatrix}
v_1 & v_2
\end{bmatrix}^\top &\mapsto \begin{bmatrix}
v_2 & v_1^{\prime\prime}
\end{bmatrix}^\top
\end{split}
\end{equation}
and $B=\begin{bmatrix}0 & \sqrt{a(x)} \end{bmatrix}^\top$. It is easy to check that $A$ generates a strongly continuous semigroup of contractions (see \cite{HarauxBook}). Now, let us check whether $d(t,\cdot)$ satisfies \eqref{bounds-D}, for all $t\geq 0$.

Since the initial conditions $(z_0,z_1)\in H_\infty(0,1)$ and satisfy 
\begin{equation}
\Vert (z_0,z_1)\Vert_{H_\infty(0,1)} \leq R,
\end{equation}
for a given positive constant $R$, then invoking Theorem \ref{thm-wp} we have, for all $t\geq 0$
\begin{equation}
\sup_{x\in [0,1]} |z_t(t,x)|\leq 2R.
\end{equation} 
Moreover, since $d$ is a continuous function, there exist two positive constants $d_0$ and $d_1$ depending on $R$ such that 
\begin{equation}
d_0:=\min_{\xi\in [-2R,2 R]} \frac{\sigma(\xi)}{\xi} \leq d(t,x) \leq \max_{\xi\in [-2R,2R]} \frac{\sigma(\xi)}{\xi}:=d_1.
\end{equation}
Note moreover that the origin of the following system
\begin{equation}
\left\{
\begin{split}
&y_{tt}(t,x) = y_{xx}(t,x) -d_0a(x) y_t(t,x),\: (t,x)\in\mathbb{R}_+\times [0,1]\\
&y(t,0)=y(t,1)=0,\: t\in\mathbb{R}_+\\
&y(0,x)=y_0(x),\: y_t(0,x)=y_1(x),\: x\in [0,1].
\end{split}
\right.
\end{equation}
is exponentially stable in $H_2$ for any initial conditions $(y_0,y_1)\in H_2$ (see e.g. \cite{enrike1990exponential}) since $a$ is continuous and nonzero. The related operator of this system is $A-d_0BB^\star$, with domain $D(A_{d_0})=D_2$. Therefore, all the properties required in Theorem \ref{thm-ltv} are satisfied. Hence, there exist two positive constants $K:=K(R)$ and $\beta:=\beta(R)$ such that
\begin{equation}
\Vert W(t,0)(y_0,y_1)\Vert_{H_2} \leq K e^{-\beta t} \Vert (y_0,y_1)\Vert_{H_2},\quad \forall t\geq 0,
\end{equation}
where $(W(t,0))_{t\geq 0}$ is the evolution family associated to \eqref{ltv-system}, with the operators given in \eqref{abstract-ltv-wave}, starting from $v(0):=\begin{bmatrix}
y_0 & y_1
\end{bmatrix}^\top$. It corresponds to the trajectory of \eqref{wave-nonmonotone} starting from the particular initial conditions $(y_0,y_1)$ bounded by $R$ in the $H_\infty$-norm. 

\textbf{Second step: Semi-global exponential stability in $H_p(0,1)$}. Proceeding as in the proof of Item 1. of Theorem \ref{thm-as}, we will now use the notation 
\begin{equation}
\Theta(t):= W(t,0),
\end{equation}
which is a linear operator. From Theorem \ref{thm-wp}, we know that, for every initial condition $(z_0,z_1)\in H_\infty(0,1)$ satisfying $\Vert (z_0,z_1)\Vert_{H_\infty(0,1)}\leq R$, and noticing that the trajectory of \eqref{wave-nonmonotone} can be expressed with the evolution family $\Theta(t)$, one has
\begin{equation}
\Vert \Theta(t)(z_0,z_1)\Vert_{H_\infty(0,1)} \leq 2R, \quad \forall t\geq 0.
\end{equation}
Now, fix $t>0$. Note that $\Theta(t)$ is an operator from $(L^2(0,1))^2$ (resp. $(L^\infty(0,1))^2$) to $(L^2(0,1))^2$ (resp. $(L^\infty(0,1))^2$), if it associates $(z_0^\prime,z_1)\in L^2(0,1)^2$ (resp. $(z_0^\prime,z_1)\in L^\infty(0,1)^2$)  to $(z_x,z_t)\in L^2(0,1)$ (resp. $(z_x,z_t)\in L^\infty(0,1)^2$). Hence, we can apply the Riesz-Thorin theorem, with $p_0=2$, $p_1=\infty$ and $p_0<p<p_1$, and conclude that
\begin{equation}
\Vert \Theta(t)\Vert_{\mathcal{L}(L^p(0,1),L^p(0,1))}\leq (2R)^{\frac{p-2}{2}}\left(K\right)^{\frac{2}{p}}e^{-\frac{2\beta}{p}t}, \quad \forall t\geq 0,
\end{equation}
where $\Theta(t)$ corresponds exactly to the trajectory of \eqref{wave-nonmonotone} with the initial condition $(z_0,z_1)\in H_\infty(0,1)$. In particular, for every $(z_0,z_1)\in H_\infty(0,1)$, one therefore has, for every $t\geq 0$
\begin{equation*}
\begin{split}
\Vert \Theta (t) (z_0,z_1)\Vert_{H_p(0,1)} \leq &\Vert \Theta(t)\Vert_{\mathcal{L}(L^p(0,1),L^p(0,1))}\Vert (z_0,z_1)\Vert_{H_p(0,1)}\\
\leq & (2R)^{\frac{p-2}{2}}\left(\frac{K}{2}\right)^{\frac{2}{p}}e^{-\frac{2\beta}{p}t}\Vert (z_0,z_1)\Vert_{H_p(0,1)}.
\end{split}
\end{equation*}
This concludes the proof of Item 1. of Theorem \ref{thm-as}.\end{proof} 

\begin{proof}\textbf{ of Item 2. of Theorem \ref{thm-as}:}
\paragraph{First step: semi-global exponential stability in $H_2(0,1)$}
As mentioned in the introduction of this section, we first prove our results in $H_2(0,1)$. Indeed, this allows to apply some theoritical results from abstract control systems theory in Hilbert spaces.
(see e.g. \cite{tucsnak2009observation} for a nice overview of such a theory). 

Hence, \eqref{wave-nonmonotone} can be rewritten in an abstract way
\begin{equation}
\label{abstract-wave}
\left\{
\begin{split}
&\frac{d}{dt} v= Av - BB^\star\tilde{\sigma}(v):=A_\sigma(v),\\
& v(0)=v_0,
\end{split}
\right.
\end{equation}
where $v=\begin{bmatrix}
v_1 & v_2
\end{bmatrix}^\top$, $A:v\in D_2(0,1)\subset H_2(0,1)\mapsto \begin{bmatrix}
0 & v_2\\
v_1^{\prime\prime} & 0
\end{bmatrix}\in H_2(0,1)$, $B:=\begin{bmatrix}
0 & \sqrt{a(x)} I_{H_2(0,1)}
\end{bmatrix}^\top$, $B^\star$ is the adjoint operator of $B$, and $\tilde{\sigma}$ might be defined as follows
\begin{equation}
\tilde{\sigma}(v):=\begin{bmatrix}
\sigma(v_1) \\ \sigma(v_2)
\end{bmatrix}.
\end{equation}

In this case, $\tilde{\sigma}$ is monotone and locally Lipschitz, then, invoving \cite[Lemma 2.1., Part IV, page 165]{showalter1997monobanach}, we know that $A_\sigma$ is a $m$-dissipative operator. In particular, $A_\sigma$ generates a strongly continuous semigroup of contractions, denoted by $(W_\sigma(t))_{t\geq 0}$. Due to \cite[Corollary 3.7., page 53]{miyadera1992nl_sg}, the following functions
\begin{equation}
t\mapsto \Vert W_\sigma(t)(v_0)\Vert_{H_2(0,1)},\: t\mapsto \Vert A_{\sigma}(W_\sigma(t)(v_0))\Vert_{H_2(0,1)}
\end{equation} 
are nonincreasing. This implies in particular that, for all $t\geq 0$
\begin{equation}
\Vert W_\sigma(t)(v_0)\Vert_{D_2(0,1)}\leq \Vert v_0\Vert_{D_2(0,1)}.
\end{equation}
From the Rellich-Kondrachov theorem \cite[Theorem 9.16, page 285]{brezis2010functional}, one has 
\begin{equation}
\Vert W_\sigma(t)(v_0)\Vert_{L^\infty(0,1)} \leq \Vert W_\sigma(t)(v_0)\Vert_{D_2(0,1)}. 
\end{equation}
This implies in particular that
\begin{equation}
\label{extra-regularity}
\Vert z_t(t,\cdot)\Vert_{L^\infty(0,1)} \leq \Vert (z_0,z_1)\Vert_{D_2(0,1)}\leq R. 
\end{equation}
Note that \eqref{abstract-wave} can be rewritten as a system in the form \eqref{ltv-system}, with $\tau=0$, setting
\begin{equation}
d(t,x) = \left\{
\begin{split}
& \frac{\sigma(z_t)}{z_t}\text{ if }& z_t\neq 0\\
& C_1 \text{ if } & z_t=0.
\end{split}
\right.
\end{equation}
Moreover, thanks to \eqref{extra-regularity}, and using the fact that $d$ is continuous, one has, for all $t\geq 0$
\begin{equation}
d_0:=\min_{\xi \in [-R,R]} \frac{\sigma(\xi)}{\xi}\leq d(t,x)\leq \max_{\xi\in [-R,R]} \frac{\sigma(\xi)}{\xi}:=d_1.
\end{equation}
It is also known moreover that the origin of
\begin{equation}
\left\{
\begin{split}
&z_{tt}(t,x)=z_{xx}(t,x) - d_0a(x) z_t(t,x),\: (t,x)\in\mathbb{R}_+\times [0,1]\\
&z(t,0)=z(t,1) = 0,\: t\in\mathbb{R}_+\\
&z(0,x) = z_0(x),\: z_t(0,x) = z_1(x),\: x\in [0,1]
\end{split}
\right.
\end{equation}
is globally exponentially stable (see e.g. \cite{enrike1990exponential}). This latter system can be written as an abstract system, with the unbounded operator $A-d_0BB^\star$. Hence, all the assumptions required by Theorem \ref{thm-ltv} are satisfied. This implies that there exist two positive constants $K:=K(R)$ and $\beta:=\beta(R)$ such that trajectories to \eqref{wave-nonmonotone} satisfy, for all $t\geq 0$
\begin{equation}
\label{1.semi-H2}
\Vert W_\sigma(t)(z_0,z_1)\Vert_{H_2(0,1)}\leq Ke^{-\beta t} \Vert (z_0,z_1)\Vert_{H_2(0,1)}.
\end{equation}

\paragraph{Second step: semi-global exponential stability in $H_p(0,1)$} Note that \eqref{abstract-wave} and \eqref{ltv-system} share a trajectory, i.e. the one in \eqref{ltv-system} corresponding to the initial condition initialized at $0$. The evolution family associated to this trajectory is given by
\begin{equation}
W(t,0):=\Theta(t),
\end{equation} 
where $(W(\theta,\tau))_{\theta\geq \tau}$ is the evolution family associated to \eqref{ltv-system}. This allows means that $W_\sigma(t)=\Theta(t)$, for all $t\geq 0$. Note that the operator $\Theta$ is linear, which is a crucial property in order to apply the Riesz-Thorin theorem. 

Because of Item 1. of Theorem \ref{thm-wp}, there exists a positive constant $C$ such that, for any initial condition $(z_0,z_1)\in H_p(0,1)$
\begin{equation}
\Vert \Theta(t) (z_0,z_1)\Vert_{H_p(0,1)} \leq C \Vert (z_0,z_1)\Vert_{H_p(0,1)}, \forall t\geq 0.
\end{equation}
Recall that 
$$
\Vert (z_0,z_1)\Vert_{H_p(0,1)} = \Vert z_0\Vert_{W_0^{1,p}(0,1)} + \Vert z_1\Vert_{L^p(0,1)}.
$$
First, since $\Vert z_0\Vert_{W_0^{1,p}(0,1)}=\Vert z_0^\prime\Vert_{L^p(0,1)}$, applying \cite[Theorem 8.8., page 212]{brezis2010functional} implies that there exists a positive constant $C$ such that
\begin{equation}
\begin{split}
\Vert z_0^\prime\Vert_{L^p(0,1)} \leq &\Vert z_0^\prime\Vert_{L^\infty(0,1)}
\leq  C\Vert z_0^\prime\Vert_{W^{1,2}(0,1)}
\leq  C\Vert z_0\Vert_{W^{2,2}(0,1)}.
\end{split}
\end{equation}
Similarly, one can also prove that
\begin{equation}
\Vert z_1\Vert_{L^p(0,1)} \leq \Vert z_1\Vert_{W^1_0(0,1)}.
\end{equation}
These two inequalities together with the definition of $H_p$ and those of $D_2$ imply that there exists a positive constant $C>0$ such that
\begin{equation}
\Vert (z_0,z_1)\Vert_{H_p(0,1)} \leq C\Vert (z_0,z_1)\Vert_{D_2}.
\end{equation}
Therefore, recalling that $\Vert (z_0,z_1)\Vert_{D_2}\leq R$, the operator $\Theta$ satisfies
\begin{equation}
\Vert \Theta(t)\Vert_{\mathcal{L}(L^p(0,1),L^p(0,1))}\leq CR,\: \forall t\geq 0.
\end{equation}
Moreover, since $\Theta(t) = W_\sigma(t)$, one has, using \eqref{1.semi-H2}
\begin{equation}
\Vert \Theta(t)\Vert_{\mathcal{L}(L^2(0,1),L^2(0,1))}\leq Ke^{-\beta t},\: \forall t\geq 0.
\end{equation} 
One can therefore apply the Riesz-Thorin theorem. This implies that, for any $q$ such that $2<q<p$
\begin{equation}
\Vert \Theta(t)\Vert_{\mathcal{L}(L^q(0,1),L^q(0,1))} \leq (CR)^{\frac{p(q-p)}{q(p-2)}}K^{\frac{2(p-q)}{q(p-2)}} e^{-\beta \frac{2(p-q)}{q(p-2)} t},\: \forall t\geq 0.
\end{equation}
It is clear then that
\begin{equation}
\Vert \Theta(t)(z_0,z_1)\Vert_{H_q(0,1)} \leq (CR)^{\frac{p(q-p)}{q(p-2)}}K^{\frac{2(p-q)}{q(p-2)}} e^{-\beta \frac{2(p-q)}{q(p-2)} t},\: \forall t\geq 0,
\end{equation}
which concludes the proof of Item 2. of Theorem \ref{thm-as}. 
\end{proof}

\subsection{Proof of Theorem \ref{thm-infty}}

The proof of this theorem consists in proving that the systems governed by the following states
\begin{equation}
u:=z_t,\: w:=z_x,
\end{equation} 
are semi-globally exponentially stable in the norm $H_2$. For example, for the state $u$, one would have
\begin{equation}
\Vert u \Vert_{H^1_0(0,1)} + \Vert u_t\Vert_{L^2(0,1)}\leq Ke^{-\beta t} \left(\Vert u_0 \Vert_{H^1_0(0,1)} + \Vert u_1\Vert_{L^2(0,1)}\right), \:\forall t\geq 0
\end{equation}
Then, invoking the Rellich-Kondrachov theorem, one can easily prove that the state $z_t$ converges to $0$ in the $L^\infty$-topology. To do so, we use the same strategy as in the proof of Theorem \ref{thm-as}. Indeed, as we will see it in the proof, the states $u$ and $w$ solve a linear time-varying system almost in the same form than the one introduced in Section \ref{sec_appendix}. We are now ready to give the proof of Theorem \ref{thm-infty}.

\begin{proof}\textbf{ of Theorem \ref{thm-infty}:}\\

The state $u$ solves the following PDE:
\begin{equation}
\label{system-u}
\left\{
\begin{split}
& u_{tt}(t,x) = u_{xx}(t,x) -a(x)\sigma^\prime(u(t,x)) u_t(t,x),\: (t,x)\in\mathbb{R}_+\times [0,1]\\
& u(t,0) = u(t,1) = 0,\: t\in\mathbb{R}_+\\
& u(0,x) = z_1(x):=u_0(x),\: u_t(0,x) = z_0^{\prime\prime}(x) - a(x)\sigma(z_1(x)):=u_1(x),\: x\in [0,1].
\end{split}
\right.
\end{equation}
We therefore end up with a trajectory of a linear time-varying system, as the one written in \eqref{ltv-system}, and with $d$ given by
\begin{equation}
d(t,x):=\sigma^\prime(u(t,x)),\: (t,x)\in\mathbb{R}_+\times [0,1].
\end{equation}
In order to apply Theorem \ref{thm-ltv}, we need to obtain upper and lower bounds for the function $d$.  Because $(z_0,z_1)\in D_\infty(0,1)$, the initial conditions are in particular in $H_\infty(0,1)$. Therefore, one can apply Theorem \ref{thm-wp}
\begin{equation}
\Vert (z,z_t)\Vert_{H_\infty(0,1)} \leq 2\Vert (z_0,z_1)\Vert_{H_\infty(0,1)}\leq 2R.
\end{equation}
Hence, because $d$ is continuous, and since its argument belongs to a compact, one can therefore prove that 
\begin{equation}
d_0:=\min_{\xi\in [-2R,2R]} \sigma^\prime(\xi)\leq d(t,x) \leq \max_{\xi\in [-2R,2R]} \sigma^\prime(\xi):=d_1.
\end{equation}
Moreover, we know that the origin of
\begin{equation}
\left\{
\begin{split}
&u_{tt}(t,x) = u_{xx}(t,x) - d_0 a(x) u_t(t,x),\: (t,x)\in\mathbb{R}_+\times [0,1],\\
&u(t,0)=u(t,1) = 0,\: t\in\mathbb{R}_+,\\
&u(0,x) = u_0(x),\: u_t(0,x) = u_1(x),\: x\in [0,1].
\end{split}
\right.
\end{equation}
is globally exponentially stable (see \cite{enrike1990exponential}). All the properties required in Theorem \ref{thm-ltv} are therefore satisfied. Applying it, we know that there exists two positive constants $K_1:=K_1(R)$ and $\beta_1:=\beta_1(R)$ such that
\begin{equation}
\Vert (u,u_t)\Vert_{H_2} \leq K_1 e^{-\beta_1 t} \Vert (u_0,u_1)\Vert_{H_2},\:\forall t\geq 0
\end{equation}
Recalling that $\Vert (u,u_t)\Vert_{H_2}=\Vert u_x\Vert_{L^2(0,1)}+\Vert u_t\Vert_{L^2(0,1)}$, and using the Rellich-Kondrachov theorem, one therefore ends up with
\begin{equation}
\Vert u(t,\cdot)\Vert_{L^\infty(0,1)} \leq K_1 e^{-\beta_1 t} \Vert (u_0,u_1)\Vert_{H_2(0,1)},\: \forall t\geq 0.
\end{equation}
Recalling the definition of $u$, one then obtains that
\begin{equation}
\label{ineq-infty-1}
\Vert z_t(t,\cdot)\Vert_{L^\infty(0,1)} \leq K_1 e^{-\beta_1 t} \Vert (z_0,z_1)\Vert_{D_2},\: \forall t\geq 0. 
\end{equation}
Let us now focus on the PDE solved by $w$, given by the following equations:
\begin{equation}
\left\{
\begin{split}
&w_{tt}(t,x) = w_{xx}(t,x) - a^{\prime}(x)\sigma(z_t(t,x)) - a(x) \sigma^\prime(z_t(t,x)) w_t(t,x),\: (t,x)\in\mathbb{R}_+\times [0,1],\\
&w(t,0)=w(t,1) = 0,\: t\in\mathbb{R}_+\\
&w(0,x) = z_0^\prime(x):=w_0(x),\: w_t(0,x) = z_1^\prime(x):=w_1(x),\: x\in [0,1].
\end{split}
\right.
\end{equation}
The boundary conditions $w(t,0)=w(t,1)=0$ are satisfied, because of the assumptions given in the statement of Theorem \ref{thm-infty}.

The system under consideration is slightly more complicated than the latter one. Indeed, it corresponds to a unhomogeneous linear time-varying system with \eqref{system-u} as corresponding homogeneous system. In an abstract framework, it can be rewritten as follows:
\begin{equation}
\label{inhomogeneous-ltv}
\left\{
\begin{split}
&\frac{d}{dt} v = Az-d(t)BB^\star v + F(t),\\
& v(0)=v_0,
\end{split}
\right.
\end{equation}
with $v=\begin{bmatrix}
w & w_t
\end{bmatrix}^\top$, $v_0=\begin{bmatrix}
w_0 & w_1
\end{bmatrix}^\top$ $A:D_2\subset H_2\rightarrow H_2$ defined as $A=\begin{bmatrix}
0 & I_{H_2}\\
\partial_{xx} & 0
\end{bmatrix}$, $B=\begin{bmatrix}
0 & \sqrt{a(x)}I_{H_2}
\end{bmatrix}^\top $, and $$
t\in\mathbb{R}_+\mapsto F(t):=-a^\prime(x)\sigma(z_t)\in\mathbb{R}_+.$$

From the discussion given in Section \ref{sec_appendix}, we know that the operator related to the homogeneous problem, which is in fact $A-d(t)BB^\star$, generates an evolution family, that we denote $(W(\theta,\tau))_{\theta\geq \tau}$. Since the homogeneous problem is the same than \eqref{system-u}, then the evolution family satisfies, for every $v\in H$
\begin{equation}
\label{estimationW}
\Vert W(t,0) v\Vert_H \leq K_1 e^{-\beta_1 t} \Vert v\Vert_H,\: t\geq 0.
\end{equation} 

Thanks to the Duhamel's formula, the trajectory of \eqref{inhomogeneous-ltv} can be rewritten as follows:
\begin{equation}
\label{Duhamel-w}
v(t) = W(t,0)v_0 + \int_0^t W(t-s,0)F(s) ds. 
\end{equation}




Note that, since $\sigma$ is assumed to be linearly bounded, then for all $t\geq 0$
\begin{equation}
|F(t)|\leq m a_1 |z_t(t,x)|.
\end{equation}
Moreover, since $(z_0,z_1)\in D_\infty(0,1)$ and then, in particular, $(z_0,z_1)\in H_\infty(0,1)$, one can therefore apply Item 1. of Theorem \ref{thm-as}. Hence, considering initial condition $(z_0,z_1)\in H_\infty(0,1)$ satisfying $\Vert (z_0,z_1)\Vert_{H_\infty(0,1)}\leq R$, with $R$ positive, there exist two positive constants $K_2:=K_2(R)$ and $\beta_2:=\beta_2(R)$ such that
\begin{equation}
\Vert (z,z_t)\Vert_{H_2(0,1)} \leq K_2e^{-\beta_2 t} \Vert (z_0,z_1)\Vert_{H_2(0,1)}.
\end{equation}
Thanks to the latter inequality, the formula given in \eqref{Duhamel-w} and the fact that one can always assume $\beta_1<\beta_2$, one has, for all $t\geq 0$
\begin{equation}
\begin{split}
\Vert v(t)\Vert_{H_2} \leq & K_1e^{-\beta_1 t} \Vert v_0\Vert_{H_2(0,1)} + \int_0^t \Vert W(t-s,0)\Vert_{\mathcal{L}(H_2(0,1),H_2(0,1))}\Vert F(s)\Vert_{H_2} ds \\
\leq & K_1e^{-\beta_1 t} \Vert v_0\Vert_{H_2(0,1)} + K_2 m a_1 \Vert (z_0,z_1)\Vert_{H_2(0,1)}\int_0^t e^{-\beta_1(t-s)}e^{-\beta_2 s}ds\\
\leq & \left(K_1+\frac{K_2}{\beta_2-\beta_1}m a_1\right)e^{-\beta_1 t}\Vert v_0\Vert_{H_2(0,1)}
\end{split}
\end{equation}
where, in the second line, we have used \eqref{estimationW}. Recalling the definition of $v$ and those of $w$, one has therefore, for all $t\geq 0$
\begin{equation}
\Vert z_x\Vert_{H^1_0(0,1)} \leq \left(K_1+\frac{K_2}{\beta_2-\beta_1}m a_1\right) e^{-\beta_1 t}\Vert (z_0,z_1)\Vert_{D_2(0,1)}.
\end{equation} 
Thanks to the Rellich-Kondrachov theorem, we thus have, for all $t\geq 0$,
\begin{equation}
\label{ineq-infty-2}
\Vert z_x\Vert_{L^\infty(0,1)} \leq \left(K_1+\frac{K_2}{\beta_2-\beta_1}m a_1\right)e^{-\beta_1 t}\Vert (z_0,z_1)\Vert_{D_2(0,1)}.
\end{equation}
Due to the definition of the norm of $H_\infty(0,1)$ and gathering \eqref{ineq-infty-1} and \eqref{ineq-infty-2} together, one therefore obtains that
\begin{equation}
\Vert (z,z_t)\Vert_{H_\infty(0,1)}\leq \left(2K_1+\frac{K_2}{\beta_2-\beta_1}m a_1\right)e^{-\beta_1 t}\Vert (z_0,z_1)\Vert_{D_2(0,1)}
\end{equation}
This concludes the proof of Theorem \ref{thm-infty}. 
\end{proof}

\section{Conclusion}

\label{sec_conclusion}


In this paper, we have provided well-posedness and asymptotic stability results for a linear wave equation subject to a nonlinearity in $L^p$ functional spaces with $p\in [2,\infty]$. For future work, it would be interesting to obtain similar results (namely exponential stability with decay rate estimates) in $H_p$, $p\in(2,\infty]$, with  initial conditions in $H_p$
and also to other one-dimensional PDEs, such as the Korteweg-de Vries equation (either linearized or nonlinear). Finally, considering the case where a one-dimensional wave equation is controlled from the boundary, as in \cite{prieur2016wavecone}, could be also interesting, especially because, at the best of our knowledge, there is no $L^p$ analysis done for such an equation. Another extension would be the case of more complex hyperbolic systems, as the ones described in \cite{bastin2016stability} in the case of linear feedback laws. The $L^p$ framework could be also interesting in this case.


\bibliographystyle{plain}
\bibliography{bibsm}
\end{document}